\documentclass[11pt, reqno]{amsart}

\usepackage{amsmath,amssymb,latexsym}
\usepackage{color}
\usepackage{xcolor}
\usepackage{graphicx}
\usepackage{cite}
\usepackage[colorlinks=true]{hyperref}
\hypersetup{urlcolor=blue, citecolor=red, linkcolor=blue}
\usepackage{microtype}

\newtheorem{theorem}{Theorem}[section]
\newtheorem{lemma}{Lemma}

\newtheorem{corollary}{Corollary}
\newtheorem*{main-theorem}{Main Theorem}
\newtheorem*{remark*}{Remark}
\newtheorem*{lemma*}{Lemma A.1}

\numberwithin{equation}{section}
\usepackage{verbatim}

\begin{document}

	\title[1D nonlinear shallow water equations]{Refined wave breaking for the one-dimensional nonlinear shallow water equations}
	
	\author{Pingchun Liu, Jean-Claude Saut, Shihan Sun, and Yuexun Wang}

	\address{School of Mathematics and Statistics, Lanzhou University, 730000 Lanzhou,
		China}
	
	\email{pingchunliu2022@lzu.edu.cn}

	\address{Université Paris-Saclay, CNRS, Laboratoire de Mathématiques d'Orsay, 91405 Orsay, France}
	\email{jean-claude.saut@universite-paris-saclay.fr}

	\address{School of Mathematics and Statistics, Lanzhou University, 730000 Lanzhou,
		China}
	
	\email{sunshh2024@lzu.edu.cn}

	\address{School of Mathematics, 
		Capital Normal University, 
		100048 Beijing, China}
	
	\email{yuexunwang@cnu.edu.cn}
	
\subjclass[2020]{76B15, 76B03, 	35S30, 35A20}

\keywords{shallow water equations, wave breaking, regularity, asymptotic convergence}

	\begin{abstract}
		This paper aims to give a refined wave breaking description of the Cauchy problem to the one-dimensional nonlinear shallow water equations  providing a sharp  estimate of the lifespan of the solutions  depending on the amplitude and topography parameters, under a  non-cavitation condition which excludes the scenario that the solutions have compact support. 
		We construct smooth initial data with finite $\dot{H}^5$-norm such that the $L^\infty$-norm of the spatial derivative of the solution blows up at one single point in finite time with a precise blowup profile. 
     
	\end{abstract}

\maketitle

	\section{Introduction}\label{sec1}
	The one-dimensional nonlinear shallow water equations (abbreviated as `NSW') equations are given by 
	\begin{equation}\label{eq:1.1}
	\left\{
	\begin{aligned}
	&\partial_{t} \zeta+\partial_{x}\big((H+\varepsilon\zeta-\beta^{*} b) \bar{v}\big)=0, \\
	&\partial_{t} \bar{v}+\varepsilon \bar{v} \partial_{x} \bar{v}+\partial_{x} \zeta=0,
	\end{aligned}
	\right.
	\end{equation}
	where $(x,t)\in\mathbb{R}\times \mathbb{R}^+$, and $\zeta$ and $\bar{v}$ are the unknown functions of $(x,t)$ representing the elevation of the surface and the vertically averaged horizontal velocity of the water, respectively. Here, $b(x)$ is the bottom topography, $H>0$ is the depth at rest of the water, 
	$\varepsilon$ and $\beta^{*}$ denote the amplitude and topography parameters, respectively, satisfying 
    \begin{equation*}
        0 \leq \varepsilon, \beta^{*} \leq 1.
    \end{equation*}
    The NSW equations are also called the Saint-Venant system. They can be derived from the full water waves system. Actually, they are the first order approximation of this system when neglecting the  terms of order $\mathcal{O}(\mu)$ where $\mu$ is the shallowness parameter. 
    
	We refer the readers to \cite{Duchene,MR3060183,MR4105349} for the derivation and full justification of the NSW equations as approximations of the water waves system. To study the Cauchy problem to \eqref{eq:1.1}, we impose the initial data
	\begin{equation}\label{initial data}
	(\zeta, \bar{v})|_{t=0}=(\zeta_0, \bar{v}_0).
	\end{equation}
	
	Let $h=H+\varepsilon\zeta-\beta^{*}  b$, which denotes the total depth of the water and thus is positive. (Notice that $\zeta$ does not have to be positive, and can even be negative at some point). We assume that  the initial elevation satisfies the following non-cavitation condition:
	\begin{equation}\label{eq:non-cavitation}
	h|_{t=0}=H+\varepsilon\zeta_0-\beta^{*} b>h_{\text{min}}
	\end{equation}
	for some constant $h_{\text{min}}>0$. 
	Following the classical theory of symmetrizable hyperbolic systems \cite{MR1172111,MR3060183}, the Cauchy problem \eqref{eq:1.1}-\eqref{initial data} under \eqref{eq:non-cavitation} is locally well-posed for times of order $\mathcal{O}(1/\max\{\varepsilon,\beta^* \})$ if the initial data $(\zeta_0, \bar{v}_0)$ belong to $H^s(\mathbb{R})$ with $s>3/2$. By the Lax's argument \cite{MR165243,Duchene}, \eqref{eq:1.1}-\eqref{initial data} under \eqref{eq:non-cavitation} can exhibit wave breaking (that is, the gradient of the solutions blows up while the solutions themselves stay bounded, which is also called shock formation or gradient blowup) at times of order $\mathcal{O}(1/\max\{\varepsilon,\beta^* \})$. Together, these two facts tell that the lifespan of the classical solutions to \eqref{eq:1.1}-\eqref{initial data} has sharp times of order $\mathcal{O}(1/\max\{\varepsilon,\beta^* \})$.

	This paper studies the Cauchy problem for the one-dimensional nonlinear shallow water equations \eqref{eq:1.1}–\eqref{initial data}. Under the non-cavitation condition \eqref{eq:non-cavitation}, we establish a refined description of wave breaking and prove that the lifespan of classical solutions is sharply of order $\mathcal{O}(1/\max\{\varepsilon,\beta^*\})$. We construct smooth initial data with finite $\dot{H}^5$-norm such that the $L^\infty$-norm of the spatial derivative of the solution blows up at a single point in finite time with a precise blow-up profile by borrowing ideas of the blowup technique based on the modulation theory in self
	similar variables used in \cite{MR4465909,MR4612575, MR4612576,MR4493613,MR4468858}, which were generalized to different contexts of PDEs \cite{Bae2024,MR4972958,MR4664477,Kim2024,MR4913083,MR4855742,MR4752990,Qiu2021,MR4752332,MR4851892,MR4321245}. The singularity occurs at a single spatial point, whose location and blowup time can be explicitly computed. The blowup profile is a cusp with $C^{1/3}$ regularity, and the singularity is described by an asymptotically self-similar shock profile that is stable under the $\dot{H}^5$ topology.  
  Central to our approach is a detailed characterization of the far-field behavior of the self-similar ansatz $(W,Z)$, which enables us to overcome the difficulties  arising from the non-cavitation condition, which prevents the Riemann invariants $(w, z)$ from having a compact support. The sharp lifespan scaling arises because the transformations cause the $\dot H^5$-norm of the initial data to be of order $\max\{\varepsilon,\beta^* \}$, as verified by a Moser-type estimate, which yields the stated time scale  by the local well-posedness theory. Furthermore, our analysis provides the first treatment of the non-flat bottom case ($b\neq 0$). This case requires the bottom topography to have a  higher regularity than the solution $(w,z)$  by \eqref{eq:2.2}, specifically, $b(x)\in \dot{H}^6(\mathbb{R})$. This requirement is critical for establishing the estimates on $\partial_{y} W$ in the middle field ($\ell \leq |y| \leq \frac{1}{2} e^{\frac{3}{2}s}$) and the far field ($\frac{1}{2} e^{\frac{3}{2}s}\leq |y| < \infty$): the growth of the weights there outpaces the decay of the bottom topography term. The necessary additional decay is obtained by applying the chain rule $\frac{d}{dy}= e^{-3s/2}\frac{d}{dx}$, which converts the high spatial regularity of $b$ into decay in $y$. We refer the readers to \cite{Chen2025,MR3799059,MR4913083} for more related blowup results on the shallow water-type equations.

	Notably, the NSW equations neglect the dispersion effects of the full water waves system. 
	By keeping terms up to order $\mathcal{O}(\mu)$, one will get the Serre–Green–Naghdi (abbreviated as `SGN') equations, to which the local well-posedness was extensively studied \cite{MR2400253,MR3851335,MR2237287}. The SGN equations contain third order dispersive terms that may play a regularizing role, which makes the question of global well-posedness therefore becomes relevant, see the numerical computations \cite{MR4470529}. The recent works \cite{MR4459027,MR4740626} considered the SGN equations with surface tension, showing the global existence of dissipative weak solutions and presenting a finite blowup result.  
	There are also many other water wave models taking dispersion effects into consideration, such as the Whitham-Boussinesq system \cite{MR2991247,MR3763731,MR3060183,MR3902471,MR3390078,MR3668593,MR4248633}  that coincides with the abcd system at low frequencies introduced in \cite{MR1915939,MR2057134}, for which the blowup of the solutions remains open \cite{KS}.

	The paper is organized as follows. In Section \ref{sec2}, we reformulate \eqref{eq:2.1} using Riemann invariants and self-similar transformations, and provide the evolution of higher-order derivatives and constraints on $W$. The assumptions on the initial data and the statement of the main results are given in Section \ref{sec3}. Section \ref{sec4} establishes the bootstrap assumptions and derives several consequences. 
	In Section \ref{sec7}, we define the Lagrangian trajectories of $W$ and $Z$ and establish control over them. Sections \ref{sec6}, \ref{sec8} and \ref{sec9} are devoted to closing the bootstrap assumptions.
	The proof of the main theorem (Theorem \ref{thm1}) is given in Section \ref{sec10}. 

	\section{The Reformulation of the Main Problem}\label{sec2}
	
	\subsection{Riemann invariants and self-similar transforms}\label{sec2.1}
	Let
	\begin{equation}\label{eq:transform}
	h=H+\varepsilon\zeta-\beta^{*} b\quad \text{and}\quad  u=\varepsilon \bar{v}.
	\end{equation}
	Notice that $\zeta$ has no fixed sign, but $h$ is always positive up to times of order $\mathcal{O}(1/\max\{\varepsilon,\beta^* \})$
	by the local-in-time well-posedness theory of \eqref{eq:1.1}-\eqref{initial data} under the non-cavitation condition \eqref{eq:non-cavitation}.  
	Thus, we rewrite \eqref{eq:1.1} by using the variables $(h, u)$  in the following form:
	\begin{equation}\label{eq:height}
	\begin{cases}
	\partial_{t} u+u \partial_{x}u+\partial_{x} h+\beta^{*} \partial_{x}b=0, \\
	\partial_{t}h+\partial_{x}(h u)=0.
	\end{cases}
	\end{equation}
	It is worth noting that one cannot impose compact support assumption on $h|_{t=0}$ in the study of wave breaking of \eqref{eq:height} due to \eqref{eq:non-cavitation}, although \eqref{eq:height} shares a similar structure with the one-dimensional compressible Euler equations. This makes our argument different from \cite{MR4612575, MR4612576} which relies crucially on the compact assumptions on initial data. 
	
	Setting 
    \begin{equation}\label{eq:transform2}
        \sigma=2h^{\frac{1}{2}},\quad u=u,
    \end{equation}
    we can symmetrize the system \eqref{eq:1.1} as follows:
	\begin{equation}\label{eq:2.1}
	\begin{cases}
	\partial_{t}u+u\partial_x u+\frac{1}{2}\sigma\partial_{x}\sigma=-\beta^{*}  \partial_{x}b,\\
	\partial_{t}{\sigma}+u\partial_x\sigma+\frac{1}{2}\sigma\partial_{x}u=0.
	\end{cases}
	\end{equation}
	Then, we define the (scaled) Riemann invariants $(w, z)$ by 
	\begin{equation}\label{eq:transform3}
	w=\frac{3}{4}(u+\sigma),\ \ z=\frac{3}{4}(u-\sigma).
	\end{equation}
	This allows us to further diagonalize the system \eqref{eq:2.1}, resulting in:
	\begin{equation}\label{eq:2.2}
	\begin{cases}
	\partial_t w+( w+\frac{1}{3} z)\partial_{x} w=-\frac{3}{4}\beta^{*}  \partial_{x}b, \\
	\partial_t z+(\frac{1}{3}w+z)\partial_{x} z=-\frac{3}{4}\beta^{*}  \partial_{x}b.
	\end{cases}
	\end{equation}
	Since \eqref{eq:2.2} is translation invariant in time, one can choose $t_{0}=-\delta$ as the initial time. 
	By the non-cavitation condition \eqref{eq:non-cavitation}, it is straightforward to check that if $(\zeta_0, \bar{v}_0) \in H^5(\mathbb{R})$, then $(w_0, z_0) \in \dot{H}^5(\mathbb{R})$. Therefore, the Cauchy problem to \eqref{eq:2.2} with the initial data $(w_0, z_0) \in \dot{H}^5(\mathbb{R})$ and $b(x)\in \dot{H}^6(\mathbb{R})$ is locally well-posed in $C([-\delta, T_*), \dot{H}^5(\mathbb{R})) \cap C^1([-\delta, T_*), \dot{H}^4(\mathbb{R}))$. 
	We will assume that $T_{*} $ is the maximal time of existence in this solution class in the rest of this paper for convenience.
	
	First, we introduce three dynamic modulation variables
	\begin{equation*}
	\xi, \tau, \kappa:[-\delta, T_{*}] \rightarrow \mathbb{R},
	\end{equation*}
	which serve as parameters to track the location, time, and amplitude of the wave breaking, satisfying
	\begin{equation}\label{eq:2.3}
	\xi(-\delta)=0,\quad \tau(-\delta)=0,\quad \kappa(-\delta)=\kappa_{0},
	\end{equation}
	and
	\begin{equation}\label{eq:2.4}
	\xi(T_{*})=x_{*}, \quad \tau(T_{*})=T_{*},
	\end{equation}
	where $T_{*} $ and $x_{*}$  are the blow-up time and location, respectively. We will later show that  $T_{*}$  is the unique fixed point of $\tau $. 
	Next, we introduce the following self-similar transformation
	\begin{equation}\label{eq:2.5}
	\begin{aligned}
	&s=s(t)=-\log(\tau(t)-t),\\
	&y=y(x,t)=\frac{x-\xi(t)}{(\tau(t)-t)^{\frac32}}
	\end{aligned}
	\end{equation}
	and self-similar ansatz
	\begin{equation}\label{eq:2.6}
	\begin{aligned}
	w(x,t)&=e^{-\frac s2}W(y,s)+\kappa(t),\\
	z(x,t)&=Z(y,s).
	\end{aligned}
	\end{equation}
    We also change the function $\partial_{x}b$ to self-similar coordinates by letting $$\partial_{x}b(x)=B(y,s).$$
	Finally, we will derive the generating equations of the unknowns $(W, Z)$. To ease notation, let 
	\begin{equation*}
	\beta_\tau=\beta_\tau(t)=\frac{1}{1-\dot\tau(t)}.
	\end{equation*}
	In terms of \eqref{eq:2.5} and \eqref{eq:2.6}, the system \eqref{eq:2.2} becomes
	\begin{equation}\label{eq:2.7}
	\left\{\begin{aligned}
	&\bigg(\partial_s-\frac{1}{2}\bigg)W+\mathcal{V}_W\cdot\partial_yW =-\frac{3}{4}\beta^{*}  \beta_\tau e^{-\frac{s}{2}}B-e^{-\frac s2}\beta_\tau\dot\kappa, \\
	&\partial_sZ+\mathcal{V}_Z\cdot\partial_yZ =-\frac{3}{4}\beta^{*}  \beta_\tau e^{-s}B.
	\end{aligned}
	\right.
	\end{equation}
	Here, the transport velocities $\mathcal{V}_W$ and $\mathcal{V}_Z$ for $W$ and $Z$, respectively, are defined as follows:
	\begin{equation}\label{eq:2.8}
	\begin{aligned}
	\mathcal{V}_W=\frac{3}{2}y+\beta_\tau W+G_W
	\end{aligned}
	\end{equation}
	with
	\begin{equation}\label{eq:2.9}
	\begin{aligned}
	G_W=e^{\frac{s}{2}}\beta_\tau\bigg(\kappa+\frac{1}{3}Z-\dot{\xi}\bigg),
	\end{aligned}
	\end{equation}
	and
	\begin{equation}\label{eq:2.10}
	\begin{aligned}
	\mathcal{V}_Z=\frac{3}{2}y+\frac{1}{3}\beta_\tau W+G_Z
	\end{aligned}
	\end{equation}
	with
	\begin{equation}\label{eq:2.11}
	\begin{aligned}
	G_Z=e^{\frac{s}{2}}\beta_\tau\bigg(\frac{1}{3}\kappa+Z-\dot{\xi}\bigg).
	\end{aligned}
	\end{equation}

	\subsection{The stable blowup self-similar Burgers profile}\label{sec2.2}
	
	The inviscid Burgers equation
	\begin{equation*}
	\partial_t u+u\partial_x u=0
	\end{equation*}
	has a family of self-similar solutions
	\begin{equation*}
	u(x,t)=(-t+T_*)^{\frac{1}{2i}}W_i\Bigg(\dfrac{x-x_*}{(-t+T_*)^{\frac{1}{2i}+1}}\Bigg),\quad \forall i\in\mathbb{N},
	\end{equation*}
	where $ T_* $ is the blowup time and $ x_* $ is the blow-up location. Among these solutions, it is known that $ \overline{W}=W_1 $ is the only stable self-similar blow-up profile, while the other profiles $ U_i $ with $ i\geq 2 $ are all unstable \cite{MR4468858}. The profile $ \overline{W} $ solves the steady-state self-similar Burgers equation, in which $\overline{W} $ satisfies
	\begin{equation}\label{eq:2.13}
	-\frac12\overline{W}+\Big(\frac32y+\overline{W}\Big)\partial_{y}\overline{W}=0,
	\end{equation}
	which admits an explicit expression (by some suitable parameterization) 
	\begin{equation*}
	\overline{W}(y)=\Bigg(-\frac{y}{2}+\bigg(\frac{1}{27}+ \frac{y^2}{4}\bigg)^{\frac12}\Bigg)^{\frac13}-\Bigg(\frac{y}{2}+\bigg(\frac{1}{27}+ \frac{y^2}{4}\bigg)^{\frac12}\Bigg)^{\frac13}.
	\end{equation*}

	At the origin, one may calculate to find that
	\begin{equation}\label{eq:2.14}
	\begin{aligned}
	&\overline{W}(0)=0,\quad\partial_y\overline{W}(0)=-1,\quad\partial_y^2\overline{W}(0)=0,\quad\\
	&\partial_y^3\overline{W}(0)=6,\quad\partial_y^{2k}\overline{W}(0)=0,\quad k=2,3,4\ldots.
	\end{aligned}
	\end{equation}
	Let $\langle \cdot \rangle$ be the Japanese bracket, defined as $\langle y \rangle=\sqrt{1+y^2}$. 
	It can be checked that $\overline{W}$ possesses the following bounds:
	\begin{equation}\label{eq:one-order}
	-1\leq \partial_y\overline W\leq 0,
	\end{equation}
	and
	\begin{equation}\label{eq:2.15}
	\begin{aligned}
	&\Vert\overline W\Vert_{L^\infty}\leq  \langle y \rangle^{\frac13},\quad\quad\;\;\Vert\partial_y\overline W\Vert_{L^\infty}\leq  \langle y \rangle^{-\frac23},\\
	&\Vert\partial^2_y\overline W\Vert_{L^\infty}\leq  \langle y \rangle^{-\frac{5}{3}},\quad\Vert\partial^k_y\overline W\Vert_{L^\infty}\lesssim  \langle y \rangle^{\frac{1}{3}-k},\quad k=3,4,5\ldots.
	\end{aligned}
	\end{equation}
	We also need the following sharper bound:
	\begin{equation}\label{eq:2.16}
	\frac{1}{4}\langle y \rangle^{-\frac23} \leq|\partial_{y} \overline{W}(y)| \leq \frac{7}{20}\langle y \rangle^{-\frac23}\quad\text{for}\quad |y| \geq 100 .
	\end{equation}
	
	\subsection{Evolution of equations of higher order derivatives}\label{sec2.3}
	
	\subsubsection {Higher-order derivatives for the $(W, Z)$-system}
	For $n\geq 1$, applying $\partial^n_y$ to \eqref{eq:2.7} yields 
	\begin{equation}\label{eq:2.17}
	\left\{\begin{aligned}
	&\bigg(\partial_s+\frac{3n-1} {2}+\beta_\tau(n+\mathbf{1}_{n\geq 2})\partial_yW\bigg)\partial^n_y W+\mathcal{V}_W\cdot\partial^{n+1}_y W=F^{(n)}_W,\\
	&\bigg(\partial_s+\frac{3}{2}n+\frac{n}{3}\beta_\tau \partial_yW\bigg)\partial^n_y Z+\mathcal{V}_Z\cdot\partial^{n+1}_y Z=F^{(n)}_Z
	\end{aligned}\right.
	\end{equation}
	with the forcing terms 
	\begin{equation}\label{eq:2.18}
	\begin{aligned}
	F^{(n)}_W&=-\frac{3}{4}\beta^{*} \beta_\tau e^{-\frac{s}{2} }\partial_{y}^{n}B-\sum^{n-1}_{k=0}\binom{n} {k}\partial^{n-k}_y G_W\partial^{k+1}_y W\\
	&\quad-\beta_\tau\mathbf{1}_{n\geq 3}\sum^{n-2}_{k=1}\binom{n} {k}\partial^{n-k}_yW\partial^{k+1}_y W,
	\end{aligned}
	\end{equation}
	and
	\begin{equation}\label{eq:2.19}
	\begin{aligned}
	F^{(n)}_Z&=-\frac{3}{4}\beta^{*} \beta_\tau e^{-s}\partial_{y}^{n}B-\sum^{n-1}_{k=0}\binom{n} {k}\partial^{n-k}_y G_Z\partial^{k+1}_y Z\\
	&\quad-\frac{1}{3}\beta_\tau\mathbf{1}_{n\geq 2}\sum^{n-2}_{k=0}\binom{n} {k}\partial^{n-k}_yW\partial^{k+1}_y Z.
	\end{aligned}
	\end{equation}

	\subsubsection {Higher-order derivatives for evolution of $\widetilde W$}
	As a part of the main result, we will show that $W$ converges asymptotically to $\overline{W}$. Therefore, it is
	necessary to work with the equations on $\widetilde W(y,s)=W(y,s)-\overline W(y)$.
	In view of \eqref{eq:2.7} and \eqref{eq:2.13}, one deduces 
	\begin{equation}\label{eq:2.20}
	\begin{aligned}
	&\partial_s\widetilde W+\bigg(\beta_\tau\partial_y\overline W-\frac{1}{2}\bigg)\widetilde W+\mathcal{V}_W\cdot\partial_y\widetilde W=\widetilde F_W
	\end{aligned}
	\end{equation}
	with
	\begin{equation*}
	\begin{aligned}
	\widetilde F_W=-\frac{3}{4}\beta^{*} \beta_{\tau} e^{-\frac{s}{2}} B-e^{-\frac s2}\beta_\tau\dot\kappa-(\beta_{\tau}\dot{\tau}\overline W+G_W)\partial_y\overline W.
	\end{aligned}
	\end{equation*}
	For $n\geq 1$, acting $\partial^n_y $ on \eqref{eq:2.20}, one finds
	\begin{equation}\label{eq:2.21}
	\Big(\partial_s+\frac{3n-1}{2}+\beta_\tau(\partial_y\overline W+n\partial_yW)\Big)\partial^n_y \widetilde W+\mathcal{V}_W\cdot\partial^{n+1}_y \widetilde W=\widetilde F^{(n)}_W,
	\end{equation}
	where the forcing terms $\widetilde F^{(n)}_W$ take the following form:
	\begin{equation*}
	\begin{aligned}
	\widetilde F^{(n)}_W&=\partial^n_y \widetilde F_W-\sum^{n-1}_{k=0}\binom{n} {k}\Big(\partial^{n-k}_y G_W\partial^{k+1}_y \widetilde W+\beta_\tau\partial^{n-k+1}_y\overline{W}\partial^{k}_y \widetilde W\Big)\\
	&\quad-\beta_\tau\mathbf{1}_{n\geq 2}\sum^{n-2}_ {k=0}\binom{n} {k}\partial^{n-k}_yW\partial^{k+1}_y \widetilde W.
	\end{aligned}
	\end{equation*}
	
	\subsection{Constraints on  $W$ and equations of the dynamic modulation variables}\label{sec2.4}
	The wave breaking is characterized by the following constraints on $W$:
	\begin{equation}\label{eq:2.22}
	W(0,s)=0,\quad\partial_yW(0,s)=-1,\quad\partial_y^2W(0,s)=0.
	\end{equation}
	Given these constraints, we can derive the equations for the dynamic modulation variables $(\kappa, \tau, \xi)$.  
	Inserting $y=0$ in \eqref{eq:2.7} gives
	\begin{equation}\label{eq:2.23}
	\dot{\kappa}=\frac1{\beta_\tau}e^{\frac s2}G_W(0,s)-\frac{3}{4}\beta^{*}  B(0,s),
	\end{equation}
	and \eqref{eq:2.17} with $n=1$ yields
	\begin{equation}\label{eq:2.24}
	\dot{\tau}=\frac1{\beta_\tau}\partial_yG_W(0,s)-\frac{3}{4}\beta^{*}  e^{-\frac{s}{2}}\partial_{y}B(0,s).
	\end{equation}
	By the definition of $G_{W}$ in \eqref{eq:2.9}, we have
	\begin{equation}\label{eq:2.25}
	\dot{\xi}=\kappa+\frac{1}{3}Z(0,s)-e^{-\frac{s}{2}}(1-\dot{\tau})G_{W}(0,s).
	\end{equation}
	Conversely, the constraints \eqref{eq:2.22} will also be maintained under the evolution equations \eqref{eq:2.23}-\eqref{eq:2.25}.

	\section{Main results}\label{sec3}
	\subsection{Initial data on self-similar variables}
	Let $M \gg 1$ and $0<\delta=\delta(M) \ll 1 $, both of which will be determined later in the proof. For convenience, we also introduce the parameters
	$$s_{0}=-\log \delta, \quad \ell=(\log M)^{-2}  $$
	and denote
	$$W_{0}(\cdot)=W(\cdot, s_{0}), \quad \widetilde{W}_{0}(\cdot)=\widetilde{W}(\cdot, s_{0}) .$$

	In the following, we select the initial data used in the main result. It suffices to give the initial data in self-similar variables, which can be easily translated into the corresponding ones in the physical variables using \eqref{eq:2.3}-\eqref{eq:2.5}.
	
	First, to meet the constrains \eqref{eq:2.22}, one shall impose the following conditions at $y=0$:
	\begin{equation}\label{eq:3.1}
	W_{0}(0)=0, \quad \partial_{y} W_{0}(0)=\min _{y} \partial_{y} W_{0}=-1, \quad \partial_{y}^{2} W_{0}(0)=0 .
	\end{equation}
	Next, we impose the following conditions in different regions of $ y $ for $\widetilde{W}_{0}$ and $W$:
	\begin{equation}\label{eq:3.2}
	\text{(Zeroth-order derivative)}\begin{cases}
	|\widetilde{W}_{0}(y)| \leq \frac{1}{2} \delta^{\frac{1}{12}} \ell^{4}, & 0 \leq|y| \leq \ell, \\ 
	|\widetilde{W}_{0}(y)| \leq \delta^{\frac{1}{12}}\langle y\rangle^{\frac{1}{3}}, & \ell \leq|y| \leq \frac{1}{2} \delta^{-\frac{3}{2}},
	\end{cases} 
	\end{equation}
	\begin{equation}\label{eq:3.3} 
	\text{(First-order derivative)}\begin{cases}
	|\partial_{y} \widetilde{W}_{0}(y)| \leq \frac{1}{2} \delta^{\frac{1}{12}} \ell^{3}, & 0 \leq|y| \leq \ell, \\ |\partial_{y} \widetilde{W}_{0}(y)| \leq \delta^{\frac{1}{12}}\langle y\rangle^{-\frac{2}{3}}, & \ell \leq|y| \leq \frac{1}{2} \delta^{-\frac{3}{2}}, \\ 
	|\partial_{y} W_{0}(y)| \leq \delta, & \frac{1}{2} \delta^{-\frac{3}{2}} \leq|y|<\infty,
	\end{cases} 
	\end{equation}
	\begin{equation}\label{eq:3.4}
	\text{(Second-order derivative)}\begin{cases}
	|\partial_{y}^{2} \widetilde{W}_{0}(y)| \leq \frac{1}{2} \delta^{\frac{1}{12}} \ell^{2}, & 0 \leq|y| \leq \ell, \\ 
	|\partial_{y}^{2} W_{0}(y)| \leq M^{\frac{1}{10}}, & \ell \leq |y|<\infty,
	\end{cases} 
	\end{equation}
	and
	\begin{equation}\label{eq:3.5}
	\text{(Higher-order derivatives)}\begin{cases}
	|\partial_{y}^{3} \widetilde{W}_{0}(y)| \leq \frac{1}{2} \delta^{\frac{1}{12}} \ell, & 0 \leq|y| \leq \ell, \\
	|\partial_{y}^{4} \widetilde{W}_{0}(y)| \leq \frac{1}{4} \delta^{\frac{1}{12}}, & 0 \leq|y| \leq \ell.
	\end{cases}
	\end{equation}
	Then, we impose the following global  $L^{\infty}$  constraints on $W$:
	\begin{equation}\label{eq:3.6}
	\begin{cases}
	\|\partial_{y}^{4} W_{0}\|_{L^{\infty}} \leq \frac{1}{2} M, \\
	\|W_{0}+\delta^{-\frac{1}{2}} \kappa_{0}\|_{L^{\infty}} \leq \frac{1}{2} M \delta^{-\frac{1}{2}},
	\end{cases}
	\end{equation}
	and the precise third derivative bootstrap at  $y=0$ for $\widetilde{W}_{0}$: 
	\begin{equation}\label{eq:3.7}
	|\partial_{y}^{3} \widetilde{W}_{0}(0)| \leq \frac{1}{4} \delta^{\frac{1}{9}}.
	\end{equation}
	
	For the initial datum of $Z$, we assume
	\begin{equation}\label{eq:3.8}
	\begin{cases}
	\| Z_{0}\|_{L^{\infty}} \leq \frac{1}{2} M \delta, \\ 
	\|\partial_{y} Z_{0}\|_{L^{\infty}} \leq \frac{1}{2} M \delta^{\frac{5}{6}},\\
	\|\partial^{4}_{y} Z_{0}\|_{L^{\infty}} \leq \frac{1}{2} M \delta^{\frac{2}{3}}. 
	\end{cases} 
	\end{equation}

	\subsection{Main results}
	The main result in this paper is stated as follows: 
	\begin{theorem}\label{thm1}
		There exist a sufficiently large  $M>0$  and  a  sufficiently small  $\delta=\delta(M)>0$  such that if the initial data  $(w_{0},z_{0}) \in \dot{H}^{5}(\mathbb{R})$  satisfies \eqref{eq:3.1}-\eqref{eq:3.8} and $b(x)\in\dot{H}^{6}(\mathbb{R})$, then the unique solution $$(w,z) \in C([-\delta, T_{*}), \dot{H}^{5}(\mathbb{R}))\cap C^{1}([-\delta, T_{*}), \dot{H}^{4}(\mathbb{R}))$$ to the Cauchy problem of \eqref{eq:2.2} with $(w_{0},z_{0})$ forms a wave breaking in finite time. Furthermore,
		\begin{itemize}    \item[(\uppercase\expandafter{\romannumeral1})] the blow-up time and location satisfy respectively
			$$T_{*} \leq 2 M\delta^{\frac{4}{3}} \quad\text{and}\quad|x_{*}| \leq 2M\delta.$$
			\item[(\uppercase\expandafter{\romannumeral2})]  The sup-norm of $w$  is bounded:  $\|w(\cdot, t)\|_{L^{\infty}} \leq M$  for all  $t \in[-\delta, T_{*}] $.
			\item[(\uppercase\expandafter{\romannumeral3})]  The sup-norm of $\partial_{x} w$  blows up and has the following blowup rate
			$$\frac{1}{2(T_{*}-t)} \leq\|\partial_{x} w\|_{L^{\infty}} \leq \frac{2}{T_{*}-t} .$$
			\item[(\uppercase\expandafter{\romannumeral4})] The function $w$ displays a cusp singularity at  $(x_{*}, T_{*})$  with  $w(\cdot, T_{*}) \in C^{\frac{1}{3}}(\mathbb{R}) $.
			\item[(\uppercase\expandafter{\romannumeral5})]  $w$  converges asymptotically in the self-similar variables to a stable self-similar solution  $\overline{W}_{\nu} $ of the inviscid Burgers equation, namely
			\begin{equation}\label{eq:3.9}
			\limsup _{s \rightarrow \infty}\|W-\overline{W}_{\nu}\|_{L^{\infty}}=0,
			\end{equation}
			where  $\overline{W}_{\nu}$  is defined by
			\begin{equation}\label{eq:3.10}
			\overline{W}_{\nu}(y)=\Big(\frac{\nu}{6}\Big)^{-\frac{1}{2}} \overline{W}\bigg(\Big(\frac{\nu}{6}\Big)^{\frac{1}{2}} y\bigg)
			\end{equation}
			with $ \nu=\lim _{s \rightarrow \infty} \partial_{y}^{3} W(0, s)$.
		\end{itemize}
	\end{theorem}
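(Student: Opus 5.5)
The plan is a bootstrap argument in the self-similar variables \eqref{eq:2.5}--\eqref{eq:2.6}, in the spirit of the Buckmaster--Shkoller--Vicol approach. I would first fix the bootstrap hypotheses, which are the initial conditions \eqref{eq:3.1}--\eqref{eq:3.8} with the constants enlarged by a factor of $2$ and propagated for all $s\ge s_0$. These consist of:
\begin{itemize}
\item bounds on the modulation variables: $|\dot\tau|\lesssim M e^{-s}$, $|\dot\kappa|$ and $|\dot\xi|$ of order $M$, and $|\kappa-\kappa_0|\lesssim M\delta^{1/2}$;
\item weighted bounds on $\partial_y^n\widetilde W$, $0\le n\le 4$, in the inner region $|y|\le\ell$ and the middle region $\ell\le|y|\le\frac12 e^{\frac32 s}$;
\item bounds on $\partial_yW$ in the far region $|y|\ge\frac12 e^{\frac32 s}$, and global bounds $\|\partial_y^4W\|_{L^\infty}\le M$;
\item decay of $Z$: $\|Z\|_{L^\infty}\lesssim M\delta$ and $\|\partial_y^nZ\|_{L^\infty}\lesssim M\delta^{\cdot}e^{-\frac32 ns}$, matching \eqref{eq:3.8};
\item control of $\partial_y^3\widetilde W(0,s)$.
\end{itemize}
From these hypotheses and the ODEs \eqref{eq:2.23}--\eqref{eq:2.25}, I would derive bounds on $G_W$, $G_Z$ and their derivatives. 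At $y=0$, $G_W$ is $O(Me^{-s})$-small, and away from $0$ it grows at most linearly in $y$ with small slope. The forcing terms $B$ satisfy $|\partial_y^nB|=e^{-\frac32 ns}|\partial_x^{n+1}b|$, so $b\in\dot H^6$ (hence $\partial_x^{n+1}b$ bounded for $n\le 4$) gives decay $e^{-\frac32 ns}$. This supplies the extra decay needed to beat the weights $\langle y\rangle^{2/3}$ in the middle and far fields.

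Next I would control the Lagrangian trajectories $\Phi_W,\Phi_Z$ of $\mathcal V_W,\mathcal V_Z$. For $|y_0|\ge\ell$, the flow escapes exponentially: $|\Phi_W(s)|\ge |y_0|e^{(s-s_0)/5}$, because $\frac32 y+\beta_\tau W$ dominates, using $W\approx\overline W$ and $y\overline W(y)\le 0$ near $0$. The flow $\Phi_Z$ moves with speed at least of order $e^{s/2}$ away from any point, since $G_Z-G_W\approx \frac23 e^{s/2}\kappa_0$ and $\kappa_0$ is of order $1$. This yields the integrability $\int_{s_0}^\infty |\partial_yW|\circ\Phi_Z\,ds\lesssim 1$. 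This is the replacement for the compact-support argument, and it works provided the far-field decay of $\partial_yW$ is established.

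With trajectory control in hand, each bootstrap bound is closed by integrating the damped transport equations \eqref{eq:2.17} and \eqref{eq:2.21} along characteristics. The damping coefficient $\frac{3n-1}{2}+\beta_\tau(n+\mathbf 1_{n\ge2})\partial_yW$ is positive for $n\ge 4$ near $0$, since $\partial_yW\ge-1$. Gronwall then gives $\|\partial_y^4W\|\le M/2+\dots$. In the inner region $|y|\le\ell$, I would use the Taylor expansion from the constraints \eqref{eq:2.22} and $\partial_y^4\widetilde W$, together with $\partial_y^3\widetilde W(0,s)$ obtained from \eqref{eq:2.21} at $y=0$. In the middle region I would apply weighted estimates for $\langle y\rangle^{-1/3}\widetilde W$ and $\langle y\rangle^{2/3}\partial_y\widetilde W$, using \eqref{eq:2.16} to keep the weighted damping positive. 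In the far field I would use the equation for $\partial_yW$ directly, with decay forced by the $e^{\frac32 s}$ rescaling of the initial $\delta$-bound. The $Z$ estimates come from damping $\frac32 n$ together with the smallness of the forcing. I expect the main obstacle to be the middle- and far-field estimate for $\partial_yW$. There the weight growth competes with the $B$-forcing and the non-compactly supported $Z$, which is exactly where the higher regularity of $b$ is used. If it failed, I would first try using $\partial_x^2 b\in L^\infty$ with an extra factor $e^{-s}$ from the prefactor $e^{-s/2}$.

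Once the bootstrap is closed, the conclusions follow:
\begin{itemize}
\item \textbf{(I)}: $\dot\tau=O(Me^{-s})$ and $\tau-t=e^{-s}\to0$ show that $\tau$ has a unique fixed point $T_*$ with $T_*\le\int|\dot\tau|\le 2M\delta^{4/3}$. Similarly $|x_*|\le\int|\dot\xi|\,dt\le 2M\delta$.
\item \textbf{(II)}: This follows from $w=e^{-s/2}W+\kappa$ and the bootstrap bound on $W+e^{s/2}\kappa$.
\item \textbf{(III)}: We have $\partial_xw=\beta$-corrected $e^{s}\partial_yW$ with $\|\partial_yW\|_{L^\infty}=1$ attained at $0$, and $e^{s}=(\tau-t)^{-1}$, which is comparable to $(T_*-t)^{-1}$ within a factor $2$.
\item \textbf{(IV)}: This follows from $|W|\lesssim\langle y\rangle^{1/3}$ uniformly, which gives a uniform $C^{1/3}$ bound on $w(\cdot,t)$ passing to $t=T_*$.
\item \textbf{(V)}: $\partial_y^3W(0,s)$ converges, because its $s$-derivative is integrable. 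Then a Lyapunov/compactness argument applied to $W-\overline W_\nu$, using the same weighted transport structure, gives convergence to zero in $L^\infty$, first on compact sets and then globally via the weighted bounds.
\end{itemize}
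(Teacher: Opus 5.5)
Your overall architecture matches the paper's: a modulated self-similar bootstrap, exponential escape of $\Phi_W$, damped transport along characteristics in the near, middle and far fields, $\partial_y^3\widetilde W(0,s)$ plus a Taylor expansion, and derivatives of $b$ converted into $e^{-\frac32 s}$ decay. However, the $Z$/modulation part of your bootstrap fails as stated, for three reasons.

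\textbf{The data do not satisfy your hypotheses.} At $s_0=-\log\delta$, \eqref{eq:3.8} only gives $\|\partial_yZ_0\|_{L^\infty}\le\frac12M\delta^{5/6}=\frac12Me^{-\frac56 s_0}$ and $\|\partial_y^4Z_0\|_{L^\infty}\le\frac12M\delta^{2/3}$. These are much weaker than $e^{-\frac32 s_0}$ and $e^{-6s_0}$. By \eqref{eq:2.24}, $\dot\tau=\frac13e^{s/2}\partial_yZ(0,s)+O(e^{-2s})$, so admissible data can have $|\dot\tau(-\delta)|$ as large as $\frac16M\delta^{1/3}\gg Me^{-s_0}$. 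Thus $|\dot\tau|\lesssim Me^{-s}$ already fails at the initial time.

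\textbf{The rates cannot propagate for $n\ge2$.} Even for better data, $e^{-\frac32 ns}$ is not preserved. $F^{(n)}_Z$ in \eqref{eq:2.19} contains $-\frac13\beta_\tau\partial_y^nW\,\partial_yZ$, with $\partial_y^nW=O(1)$ on $|y|\lesssim1$. A $Z$-characteristic needs $s$-time $\gtrsim M^{-1}e^{-s/2}$ to cross that region, by \eqref{eq:4.22}. So generically $\partial_y^nZ$ becomes of size about $e^{-2s}$.

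\textbf{The escape of $\Phi_Z$ needs an unstated hypothesis.} Your bound $\int|\partial_yW|\circ\Phi_Z\,ds\lesssim1$ relies on $G_Z-G_W=-\frac23e^{s/2}\beta_\tau(\kappa-Z)$ with $|\kappa-Z|$ bounded below. Nothing in \eqref{eq:3.1}--\eqref{eq:3.8} gives this: \eqref{eq:3.6} only yields $|\kappa_0|\le\frac12M$. Since $\kappa_0=0$ is admissible, $Z$-characteristics can linger where $\partial_yW\approx-1$. Such a lower bound amounts to a quantitative non-cavitation condition, because $\kappa-Z(0,s)=\frac32\sigma(\xi(t),t)$, but it would be an extra hypothesis.

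The paper avoids all of this by choosing weak rates that match \eqref{eq:3.8} exactly: $\|\partial_yZ\|_{L^\infty}\le Me^{-\frac56s}$ and $\|\partial_y^4Z\|_{L^\infty}\le Me^{-\frac23s}$, with $n=2,3$ from the interpolation \eqref{eq:4.15}. These close using only the \emph{pointwise} damping bounds $\frac23+\frac13\beta_\tau\partial_yW\ge\frac16$ and $\frac{16}{3}+\frac43\beta_\tau\partial_yW\ge1$, which follow from $\|\partial_yW\|_{L^\infty}\le1+\delta^{1/18}$. So neither escape of $\Phi_Z$ nor any condition on $\kappa_0$ is needed. The price is weaker modulation control: $|G_W(0,s)|\le Me^{-\frac{5}{18}s}$, $|\partial_yG_W|\lesssim Me^{-\frac s3}$ and $|\dot\tau|\le2Me^{-\frac s3}$. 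This $e^{-\frac s3}$ rate is exactly what yields $T_*\le2M\delta^{4/3}$ in (I), and all $W$-estimates must then be run with these rates.

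Three smaller points:
\begin{itemize}
\item In the middle field the weighted damping is not positive; it is only $\ge-3\langle y\rangle^{-2/3}-e^{-s/6}$. You need $\int\langle\Phi_W\rangle^{-2/3}\,ds\le10\log\frac1\ell$ from \eqref{eq:7.5}, at the cost of a factor $\ell^{-30}$. The sharp bound \eqref{eq:2.16} is used at the entry into the far field, not to make the damping positive.
\item For (IV), $|W|\lesssim\langle y\rangle^{1/3}$ only controls oscillation about $\xi(t)$. What you need is $|\partial_xw|=e^s|\partial_yW|\lesssim|x-\xi(t)|^{-2/3}$, plus the lower bound \eqref{eq:2.16} to get the cusp.
\item For (V), global $L^\infty$ convergence via weighted bounds is impossible. $W(\cdot,s)$ is bounded while $\overline W_\nu$ grows like $|y|^{1/3}$, so only pointwise (locally uniform) convergence can hold. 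That is also all the argument in Section \ref{sec10} establishes.
\end{itemize}
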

	Moreover, the blow-up results in Theorem \ref{thm1} are stable under perturbation, so one can relax the initial conditions above to reach  the following:
	\begin{corollary}\label{coro1}
		There exists an open set of initial data in the $\dot{H}^{5} $ topology satisfying the hypothesis in Theorem \ref{thm1} such that the conclusions of Theorem \ref{thm1} still hold.
	\end{corollary}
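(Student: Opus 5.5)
The plan is to prove Corollary \ref{coro1} by showing that every hypothesis of Theorem \ref{thm1} is either an open condition in the $\dot H^5$ topology, or can be restored by adjusting finitely many parameters. The proof of Theorem \ref{thm1} only uses \eqref{eq:3.1}--\eqref{eq:3.8} through the bootstrap in Sections \ref{sec4}--\ref{sec9}. So it suffices to find a set of data that is open in $\dot H^5$ and on which the initial inequalities hold with room to spare.

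\textbf{Step 1: strict inequalities.} Fix initial data $(w_0,z_0)$ for which \eqref{eq:3.2}--\eqref{eq:3.8} hold with strict inequality. One can take, for instance, $W_0=\overline W$ near the origin, truncated smoothly in the far field. By the Sobolev embedding $\dot H^5\hookrightarrow \dot C^{k}$ for $1\le k\le 4$ (after fixing the constant mode), every pointwise or $L^\infty$ bound on derivatives of order $1$ to $4$ is then stable under small $\dot H^5$ perturbations. Two points need care:
\begin{itemize}
\item The zeroth-order bounds on $\widetilde W_0$ and $Z_0$ need the additive constant to be controlled. I would handle this by working with data modulo constants fixed through $w_0(0)$, or by also imposing smallness in $L^\infty$.
\item Perturbation in physical variables transfers to self-similar variables with the fixed scaling factor $\delta^{-3/2}$. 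This multiplies the norms by harmless powers of $\delta$, which are fixed once $M$ and $\delta$ are chosen.
\end{itemize}

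\textbf{Step 2: restoring the equality constraints.} The conditions \eqref{eq:3.1} are equalities, so they are not open. For a perturbed datum $\tilde w_0$, I restore them by re-choosing the free parameters in \eqref{eq:2.3}--\eqref{eq:2.5}: the initial shift $\xi(-\delta)=x_0$, the initial blowup time $\tau(-\delta)=\tau_0$, and $\kappa_0$. Define $x_0$ as the point where $\partial_x\tilde w_0$ attains its minimum. Since $\partial_y^2 W_0(0)=0$ and $\partial_y^3 W_0(0)\approx 6>0$, this minimum is nondegenerate, so the implicit function theorem gives a unique nearby $x_0$ depending continuously on the perturbation. Then:
\begin{itemize}
\item set $\kappa_0=\tilde w_0(x_0)$;
\item choose $\tau_0$ so that $(\tau_0+\delta)^{-1}\partial_x\tilde w_0(x_0)\cdot(\dots)$ normalizes $\partial_yW_0(0)=-1$, i.e.\ so that $\tau_0+\delta=-1/\partial_x\tilde w_0(x_0)$.
\end{itemize}
These choices differ from the unperturbed ones by $O(\eta)$, where $\eta$ is the size of the perturbation. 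In the rescaled variables this only slightly changes $\delta$, $s_0$, and the translation, so the strict inequalities from Step 1 survive. The proof of Theorem \ref{thm1} is invariant under translations of $t$ and $x$, so it applies verbatim with these shifted parameters.

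\textbf{Step 3: conclusion.} Apply Theorem \ref{thm1} to each datum in the $\eta$-neighbourhood. Since the conclusions (I)--(V) hold with constants uniform in the bootstrap, the corollary follows. The main obstacle is Step 2: checking that after re-normalization the localized bounds near $|y|\le\ell$, in particular \eqref{eq:3.7} at scale $\delta^{1/9}$, remain valid. The slight dilation and translation of $y$ induced by the new parameters must cost at most $O(\eta\,\delta^{-3/2})$, and $\eta$ has to be chosen small relative to $\delta^{2}$, so that this cost fits within the margins $\tfrac12\delta^{1/12}\ell^{k}$.
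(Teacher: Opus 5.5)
Your architecture is the standard one behind the references the paper defers to (\cite{MR4612576,MR4321245}; the paper itself omits the proof). You keep every inequality hypothesis with a strict margin, and you restore the three equality constraints in \eqref{eq:3.1} using the three free initial values $\xi(-\delta)=x_0$, $\kappa_0$, $\tau(-\delta)=\tau_0$. Here $x_0$ is the implicit-function-theorem zero of $\partial_x^2$ of the perturbed datum near the origin.

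Step 2 is sound. Since $\partial_y^kW_0=\delta^{(3k-1)/2}\partial_x^kw_0$ and $\partial_x^3w_0(0)\approx 6\delta^{-4}$, a perturbation with $\|\partial_x^jp\|_{L^\infty}\le\eta$ for $j\le 3$ moves $x_0$ by $O(\eta\delta^4)$, $\kappa_0$ by $O(\eta)$ and $\tau_0$ by $O(\eta\delta^2)$. All of these lie well inside the bootstrap margins. Two small points:
\begin{itemize}
\item Spatial translation is not a symmetry of \eqref{eq:2.2} when $b\neq 0$. You must translate $b$ as well, which is harmless because the hypotheses on $b$ are translation invariant; alternatively, run the bootstrap with $|\xi(-\delta)|\ll\delta$.
\item The global-minimum part of \eqref{eq:3.1} requires the base datum to satisfy $\partial_yW_0\ge -1+c(r)$ on $\{|y|\ge r\}$. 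The implicit function theorem only gives a local critical point, not this.
\end{itemize}

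The genuine gap is in Step 1. The embedding $\dot H^5\hookrightarrow \dot C^k$ for $1\le k\le 4$ is false on $\mathbb{R}$, and fixing the constant mode does not help, because $\dot H^5$ controls only $\partial_x^5$ in $L^2$. For $q\in C_c^\infty(\mathbb{R})$, set $p_R(x)=AR^kq(x/R)$. Then $\|\partial_x^5p_R\|_{L^2}=AR^{k-9/2}\|q^{(5)}\|_{L^2}\to 0$ as $R\to\infty$, while $\|\partial_x^kp_R\|_{L^\infty}=A\|q^{(k)}\|_{L^\infty}$ stays fixed, for every $0\le k\le 4$ and every amplitude $A$. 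Adding such a $p_R$ to $w_0$ or $z_0$ violates, for example, $\eqref{eq:3.3}_{3}$, $\eqref{eq:3.6}_{2}$ or \eqref{eq:3.8}. With $k=0$ and $A>2M$ it even violates conclusion (II) at $t=-\delta$.

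Consequently, no nonempty set that is open for the $\dot H^5$ seminorm satisfies the hypotheses. The corollary can only hold, and your argument can only close, if perturbations are measured in a stronger norm. There are two natural choices:
\begin{itemize}
\item The inhomogeneous $H^5$ norm, i.e.\ data in an affine space $w_0+H^5$, consistent with $(\zeta_0,\bar v_0)\in H^5$. Here $H^5\hookrightarrow C^4$ gives exactly the stability you invoke.
\item The norm of $\dot H^5\cap L^\infty$. Here the Gagliardo--Nirenberg inequality $\|\partial_x^kp\|_{L^\infty}\lesssim \|p\|_{L^\infty}^{1-2k/9}\|\partial_x^5p\|_{L^2}^{2k/9}$, valid for $0\le k\le 4$, controls every derivative you use.
\end{itemize}
So your second hedge (also imposing $L^\infty$ smallness) is the right repair. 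However, it is needed for all derivative orders $0$ through $4$, not just for the zeroth-order bounds, and it changes the topology in the statement. Your first hedge (working modulo constants) repairs nothing. With this change, and with $\eta$ small relative to $\delta^{7/12}\ell^4$ (your $\eta\ll\delta^2$ suffices), the rest of your argument goes through.
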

	The proof of Corollary \ref{coro1} is quite similar to \cite{MR4612576,MR4321245}, so we will omit it. In the remaining sections, we will focus on the proof of Theorem \ref{thm1}.
	
	\subsection{Sharp lifespan at times of order $\mathcal{O}(1/\max\{\varepsilon,\beta^* \})$}
	We now prove that the lifespan of the classical solutions to the Cauchy problem \eqref{eq:1.1}-\eqref{initial data} under the non-cavitation condition \eqref{eq:non-cavitation} has the sharp times of order $\mathcal{O}(1/\max\{\varepsilon,\beta^* \})$. This scaling arises because the transformations introduce parameter dependence into the Sobolev norm of the transformed initial data $(w_0, z_0)$. Although the blowup time $T^*$ in Theorem \ref{thm1} is constructed independently of $\varepsilon$ and $\beta^*$, the stated order appears when $T^*$ is expressed in terms of the original variables. Since \eqref{eq:transform} and \eqref{eq:transform3} are linear, the scaling of the initial data is straightforward. We only focus on the nonlinear transformation \eqref{eq:transform2}.
    
    Recall the relations for the initial data:
    \begin{equation*}
    u_0=\varepsilon \bar{v}_0, \quad\sigma_{0}=2\sqrt{H+\varepsilon \zeta_{0}-\beta^{*}b} .
    \end{equation*}
Set $f = \varepsilon \zeta_0 - \beta^* b$ and define \(G(f) = \frac{2f}{\sqrt{H+f}+H}\), so that $\sigma_{0}-2H=G(f)$. To estimate $\| \sigma_0 \|_{\dot H^5}= \| G(f) \|_{\dot H^5}$,  we use the following Moser-type estimate for compositions:
\begin{lemma}[\cite{MR1477408}]\label{Moser-type estimate}
  Let \( s \geq 1 \) be an integer, and let \( G \in C^s(\mathbb{R}) \) be a real-valued function with \( G(0) = 0 \). Assume that \( f \in H^s(\mathbb{R}^d) \cap L^\infty(\mathbb{R}^d) \). Then there exists a constant \( C = C(s, d) > 0 \) such that the composition \( G(f) \) belongs to \( H^s(\mathbb{R}^d) \) and satisfies the estimate
\[
\| G(f) \|_{H^s} \leq C \, \| G' \|_{C^{s-1}} \left( 1 + \| f \|_{L^\infty}^{s-1} \right) \| f \|_{H^s}.
\]  
\end{lemma}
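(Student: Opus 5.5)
The plan is to prove the estimate by induction-free direct expansion: since $\|G(f)\|_{H^s}\simeq \|G(f)\|_{L^2}+\|\partial^s G(f)\|_{L^2}$, I would treat the two pieces separately. For the $L^2$ part, the hypothesis $G(0)=0$ together with the mean value theorem gives $|G(f(x))|\le \|G'\|_{L^\infty}|f(x)|$ pointwise, hence $\|G(f)\|_{L^2}\le \|G'\|_{C^0}\|f\|_{L^2}$. This is the only place $G(0)=0$ enters, and it already has the right form.

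For the top-order part, I would expand $\partial^\alpha G(f)$, $|\alpha|=s$, by the Faà di Bruno formula. This writes it as a finite sum, with coefficients depending only on $s,d$, of terms
\[
G^{(k)}(f)\,\partial^{\alpha_1}f\cdots\partial^{\alpha_k}f,\qquad 1\le k\le s,\quad \alpha_1+\dots+\alpha_k=\alpha,\quad |\alpha_j|\ge1 .
\]
The factor $G^{(k)}(f)$ is bounded in $L^\infty$ by $\|G'\|_{C^{s-1}}$ because $k-1\le s-1$. It then remains to bound the product of derivatives in $L^2$.

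For that product I would use Hölder with exponents $p_j=2s/|\alpha_j|$, which satisfy $\sum 1/p_j=1/2$. Each factor is then controlled by the Gagliardo--Nirenberg inequality
\[
\|\partial^{\alpha_j}f\|_{L^{2s/|\alpha_j|}}\le C\|f\|_{L^\infty}^{1-|\alpha_j|/s}\|\partial^s f\|_{L^2}^{|\alpha_j|/s}.
\]
Multiplying these gives $C\|f\|_{L^\infty}^{k-1}\|f\|_{\dot H^s}$, since the exponents sum to $k-\sum|\alpha_j|/s=k-1$ and $1$ respectively. Because $0\le k-1\le s-1$, we have $\|f\|_{L^\infty}^{k-1}\le 1+\|f\|_{L^\infty}^{s-1}$. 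Summing over all terms and all $|\alpha|=s$, and adding the $L^2$ bound, yields the claimed estimate.

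The main obstacle is the Gagliardo--Nirenberg step with an $L^\infty$ endpoint. This is the one genuinely nontrivial analytic input. I would prove it, or quote it, via the Littlewood--Paley characterization, or by interpolating the one-step inequality $\|\nabla g\|_{L^{2p}}^2\lesssim\|g\|_{L^\infty}\|\nabla^2 g\|_{L^p}$ (integration by parts) iteratively. All other steps are bookkeeping. A small point to check is that $G(f)$ actually lies in $H^s$ rather than merely satisfying an a priori bound. I would handle this by approximating $f$ by smooth functions $f_n\to f$ in $H^s$ with uniformly bounded $L^\infty$ norms and passing to the limit using the uniform estimate and weak compactness.
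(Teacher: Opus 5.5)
Your proposal is correct, and it is the standard proof of this Moser-type estimate. The paper gives no proof of its own and simply quotes the result from \cite{MR1477408}; the argument behind that result goes the way you describe. The $L^2$ part comes from $G(0)=0$ and the mean value theorem. The top-order part comes from the chain-rule (Faà di Bruno) expansion, H\"older's inequality, and the Gagliardo--Nirenberg inequality $\|\partial^{j}f\|_{L^{2s/j}}\lesssim\|f\|_{L^\infty}^{1-j/s}\|\partial^s f\|_{L^2}^{j/s}$.

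Your approximation step also works. To identify the weak limit, note explicitly that $G(f_n)\to G(f)$ in $L^2$, since $G$ is Lipschitz on $[-\|f\|_{L^\infty},\|f\|_{L^\infty}]$ and mollification does not increase $\|f\|_{L^\infty}$.
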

Applying Lemma \ref{Moser-type estimate} it with $s=5$ yields
\begin{equation*}
\begin{aligned}
&\| \sigma_0 \|_{\dot H^5} = \| G(f) \|_{\dot H^5}\\
&\leq \max\{\varepsilon,\beta^* \}C\Big(\frac{1}{h_{min}},\| \zeta_0 \|_{\dot H^5} + \| b \|_{\dot H^5} \Big) (\| \zeta_0 \|_{\dot H^5} + \| b \|_{\dot H^5}).
\end{aligned}
\end{equation*}
For $\|u_0\|_{\dot{H}^5}$,  we have the straightforward bound
\[
\|u_0\|_{\dot{H}^5} = \varepsilon \|\bar{v}_0\|_{\dot{H}^5} \leq \max\{\varepsilon,\beta^* \}\|\bar{v}_0\|_{\dot{H}^5}.
\]       
Combining these two estimates above gives
$$\|(w_{0},z_{0})\|_{\dot H^5}\sim \|(u_{0},\sigma_{0})\|_{\dot H^5} \lesssim   \max\{\varepsilon,\beta^* \}(\| \zeta_0 \|_{\dot H^5} +\| \bar v_0 \|_{\dot H^5}+ \| b \|_{\dot H^5})$$    
 According to the the classical theory of symmetrizable hyperbolic system, the lifespan of a solution is inversely proportional to the Sobolev norm of its initial data. Therefore, the maximal existence time  $T^{*}$ satisfies 
$$T^{*}=\mathcal{O}(1/\max\{\varepsilon,\beta^* \}).$$

	\section{Bootstrap assumptions}\label{sec4}
	The proof of Theorem \ref{thm1} is built on a bootstrap argument, which will be made precisely in this section.

	\subsection {Bootstrap assumptions for the self-similar variables.}\label{sec4.1}
	First, we make the following bootstrap assumptions for $\widetilde{W}$ and $W$ in different regimes:
	\begin{equation}\label{eq:4.1}
	\text{(Zeroth-order derivative)}\begin{cases}
	|\widetilde{W}(y, s)| \leq \delta^{\frac{1}{12}} \ell^{4}, & 0 \leq|y| \leq \ell, \\
	|\widetilde{W}(y, s)| \leq  \delta^{\frac{1}{15}}\langle y \rangle^{\frac 13}, & \ell \leq |y| \leq \frac{1}{2}e^{\frac{3}{2}s},
	\end{cases}
	\end{equation}
	\begin{equation}\label{eq:4.2}
	\text{(First-order derivative)}\begin{cases}
	|\partial_{y} \widetilde{W}(y, s)| \leq \delta^{\frac{1}{12}}\ell^{3}, & 0 \leq|y| \leq \ell, \\
	|\partial_{y} \widetilde{W}(y, s)| \leq  \delta^{\frac{1}{18}}\langle y \rangle^{-\frac 23}, & \ell \leq|y| \leq \frac{1}{2}e^{\frac{3}{2}s}, \\
	|\partial_{y} W(y, s)| \leq 2 e^{-s}, & |y|\geq \frac{1}{2}e^{\frac{3}{2}s},
	\end{cases}
	\end{equation}
	\begin{equation}\label{eq:4.3}
	\text{(Second-order derivative)}\begin{cases}
	|\partial_{y}^{2} \widetilde{W}(y, s)| \leq \delta^{\frac{1}{12}} \ell^{2}, & 0 \leq|y| \leq \ell, \\
	|\partial_{y}^{2} W(y, s)| \leq M^{\frac{1}{5}}, &  |y|\geq  \ell,\\
	\end{cases}
	\end{equation}
	and
	\begin{equation}\label{eq:4.4}
	\text{(Higher-order derivatives)}\begin{cases}
	|\partial_{y}^{3} \widetilde{W}(y, s)| \leq \delta^{\frac{1}{12}}\ell, & 0 \leq|y| \leq \ell, \\
	|\partial_{y}^{4} \widetilde{W}(y, s)| \leq \delta^{\frac{1}{12}},  &0  \leq|y| \leq \ell.
	\end{cases}
	\end{equation}
	Next, we assume the global $ L^{\infty}$ bounds on $W$ that
	\begin{equation}\label{eq:4.5}
	\begin{cases}
	\| W+e^{\frac{s}{2}}\kappa\|_{L^{\infty}} \leq Me^{\frac{s}{2}},\\
	\|\partial_{y}^{4} W\|_{L^{\infty}} \leq M .
	\end{cases}
	\end{equation}
	While at $y=0$, we assume for $\widetilde W$ that
	\begin{equation}\label{eq:4.6}
	|\partial^3_y\widetilde W(0, s)|\leq \delta^{\frac{1}{9}}.
	\end{equation}
	
	Then, for $Z$, we assume the following $L^{\infty}$ bound
	\begin{equation}\label{eq:4.7}
	\begin{cases}
	\|Z\|_{L^{\infty}}\leq  M \delta, \\
	\|\partial_y Z\|_{L^{\infty}} \leq  M e^{-\frac{5}{6}s},\\
	\|\partial^{4}_y Z\|_{L^{\infty}} \leq  M e^{-\frac{2}{3}s}.
	\end{cases}
	\end{equation}
	
	\subsection {Bootstrap assumptions for the dynamic modulations variables.}\label{sec4.2}
	For the dynamic modulation variables, we assume that
	\begin{equation}\label{eq:4.8}
	\begin{aligned}
	|\tau(t)|\leq  2M\delta^{\frac{4}{3}},\quad 
	|\dot\tau(t)|\leq  2Me^{-\frac{s}{3} },
	\end{aligned}
	\end{equation}
	and
	\begin{equation}\label{eq:4.9}
	\begin{aligned}
	|\xi(t)|\leq  2M\delta, \quad |\dot\xi(t)|\leq  2M
	\end{aligned}
	\end{equation}
	for all $-\delta<  t<T_*$.

	\subsection{Consequences of the bootstrap assumptions.}\label{sec4.3}
	By the bootstrap assumptions in Subsections \ref{sec4.1}-\ref{sec4.2}, one can derive some direct consequences that will be frequently used later.
	
	(I) \underline{$L^{\infty}$ bound on $ \partial_{y} W $.} We claim that
	\begin{equation}\label{eq:4.10}
	\frac{99}{100} \leq\|\partial_{y} W\|_{L^{\infty}} \leq \frac{101}{100}.
	\end{equation}
	It suffices to show the second inequality in \eqref{eq:4.10}, as the first one can be established through analogous reasoning. The proof involves an analysis of different scales of $|y|$.  
	When  $0 \leq|y| \leq \ell $, one uses $\eqref{eq:4.2}_{1}$ to infer
	\begin{equation*}
	\begin{aligned}
	|\partial_{y} W(y, s)|\leq|\partial_{y} \widetilde{W}(y, s)|+|\partial_{y} \overline{W}(y)|\leq \delta^{\frac{1}{12}} \ell^{3}+1 \leq \delta^{\frac{1}{18}}+1.
	\end{aligned}
	\end{equation*}
	When $ \ell \leq|y| \leq \frac{1}{2} e^{\frac{3 s}{2}} $, it follows from $\eqref{eq:4.2}_{2}$  that
	\begin{equation*}
	\begin{aligned}
	|\partial_{y} W(y, s)|\leq|\partial_{y} \widetilde{W}(y, s)|+|\partial_{y} \overline{W}(y)|\leq(\delta^{\frac{1}{18}}+1)\langle \ell \rangle^{-\frac{2}{3}} \leq\delta^{\frac{1}{18}}+1.
	\end{aligned}
	\end{equation*}
	When  $\frac{1}{2} e^{\frac{3 s}{2}} \leq|y|<\infty $, one may apply  $\eqref{eq:4.2}_{3}$  to deduce
	\begin{equation*}
	|\partial_{y} W(y, s)| \leq 2 e^{-s} \leq 2 \delta \leq \delta^{\frac{1}{18}}+1
	\end{equation*}
	due to  $s \geq s_{0} $. Consequently, we obtain 
	\begin{equation}\label{eq:4.11}
	\|\partial_{y} W\|_{L^{\infty}} \leq 1+\delta^{\frac{1}{18}},
	\end{equation}
	which leads to the desired upper bound by taking $\delta$ sufficiently small.
	
	We note that by using some more delicate decay of $\overline{W}$, one can further show that
	\begin{equation}\label{eq:4.12}
	\|\partial_{y} W(\cdot, s)\|_{L^{\infty}}=1=-\partial_{y} W(0, s)
	\end{equation}
	with the extremum attained uniquely at  $y=0$ (see \cite{MR4321245} for more details).
	
	(II) \underline{$ L^{\infty} $ bounds on  
		$ (\partial_{y}^{j}W, \partial_{y}^{j}Z) $ for  $ j=2,3 $.} From $\eqref{eq:4.3}_{1}$, we have
	\begin{equation*}
	|\partial_{y}^{2} W(y, s)| \leq|\partial_{y}^{2} \widetilde{W}(y, s)|+|\partial_{y}^{2} \overline{W}(y)| \leq \delta^{\frac{1}{12}} \ell^{2}+1\leq M^{\frac{1}{5}}
	\end{equation*}
	for $0 \leq|y| \leq \ell $. Combining this with $\eqref{eq:4.3}_{2}$, we obtain 
	\begin{equation}\label{eq:4.13}
	\|\partial_{y}^{2} W\|_{L^{\infty}} \leq M^{\frac{1}{5}}.
	\end{equation}
	This,  together with $\eqref{eq:4.5}_{2}$ and the Gagliardo-Nirenberg interpolation inequality, yields
	\begin{equation}\label{eq:4.14}
	\|\partial_{y}^{3} W\|_{L^{\infty}} \lesssim\|\partial_{y}^{2} W\|_{L^{\infty}}^{\frac{1}{2}}\|\partial_{y}^{4} W\|_{L^{\infty}}^{\frac{1}{2}} \lesssim M^{\frac{3}{5}}.
	\end{equation}

	Similarly, by \eqref{eq:4.7}, we can deduce that
	\begin{equation}\label{eq:4.15}
	\begin{aligned} 
	&\|\partial_{y}^{2} Z\|_{L^{\infty}} \lesssim\|\partial_{y} Z\|_{L^{\infty}}^{\frac{2}{3}}\|\partial_{y}^{4} Z\|_{L^{\infty}}^{\frac{1}{3}} \lesssim Me^{-\frac{7}{9}s},\\
	&\|\partial_{y}^{3} Z\|_{L^{\infty}} \lesssim\|\partial_{y} Z\|_{L^{\infty}}^{\frac{1}{3}}\|\partial_{y}^{4} Z\|_{L^{\infty}}^{\frac{2}{3}} \lesssim Me^{-\frac{13}{18}s}.
	\end{aligned}
	\end{equation}
	
	(III) \underline{The bounds on  $\beta_{\tau}$ and $\kappa$.} From \eqref{eq:4.5} and \eqref{eq:4.8}, we have
	\begin{equation}\label{eq:4.16}
	\begin{aligned}
	\frac{99}{100}\leq \frac{1}{1-|\dot{\tau}|}\leq| \beta_{\tau}|=\Big|\frac{1}{1-\dot{\tau}}\Big| \leq 1+2|\dot{\tau}| \leq 1+4Me^{-\frac{s}{3}},
	\end{aligned}
	\end{equation}
	and
	\begin{equation}\label{eq:4.17}
	|\kappa(t)|=|w(\xi(t), t)| \leq\|e^{-\frac{s}{2}} W+\kappa\|_{L^{\infty}} \leq M.
	\end{equation}
	
	(IV) \underline{$L^{\infty}$  bound on $W$.}
	It follows from $\eqref{eq:4.5}_{1}$ and $\eqref{eq:4.17}$ that
	\begin{equation}\label{eq:4.18}
	\|W\|_{L^{\infty}}\leq  \|W+e^{\frac{s}{2}}\kappa\|_{L^{\infty}}+\|e^{\frac{s}{2}}\kappa\|_{L^{\infty}}\leq 2Me^{\frac{s}{2}}.
	\end{equation}

	(V) \underline{The point-wise bound on $G_W$.}
	Notice by the mean value theorem that
	\begin{equation*}
	G_W(y,s)=G_W(0,s)+y\cdot\partial_y G_W(y,s).
	\end{equation*}
	
	First, considering the evolution of $\partial_{y}^2 W$ at $y = 0$ in \eqref{eq:2.17}, one has
	\begin{equation*}
	G_W(0,s)=(\partial_{y}^3W(0,s))^{-1}\partial^{2}_y G_W(0,s).
	\end{equation*}
	From \eqref{eq:2.14}, \eqref{eq:4.6}, \eqref{eq:4.13} and \eqref{eq:4.15}, it follows  that
	\begin{equation*}
	\begin{aligned}
	&\partial_y^3 W(0,s)=\partial_y^3\overline{W}(0,s)+\partial_y^3\widetilde{W}(0,s)\geq 6-\delta^{\frac{1}{9}},\\
	&|\partial^{2}_y G_W(0,s)|\lesssim e^{\frac{s}{2}}|\partial^{2}_{y}Z|\leq M e^{-\frac{5}{18} s}.
	\end{aligned}
	\end{equation*}
	Consequently,
	\begin{equation}\label{eq:4.19}
	|G_W(0,s)|\leq M e^{-\frac{5}{18} s}.
	\end{equation}
	
	Next, it is easy to observe that
	\begin{equation}\label{eq:4.20}
	|\partial_yG_W|\lesssim e^{\frac{s}{2}}|\partial_y Z|\lesssim M e^{-\frac{s}{3}}.
	\end{equation}
	
	By using \eqref{eq:4.15}, \eqref{eq:4.20}  and \eqref{eq:4.19}, we obtain
	\begin{equation}\label{eq:4.21}
	|G_W(y,s)|\lesssim|G_W(0,s)|+|y||\partial_yG_W|\lesssim M(e^{-\frac{5}{18}s}+|y|e^{-\frac{s}{3}}).
	\end{equation}
	
	(VI) \underline{$ L^{\infty} $ bounds on  $G_W$ and $ G_Z $.}
	The $L^\infty$-norm of $G_W$ and $G_Z$ can be estimated from \eqref{eq:4.7}, \eqref{eq:4.8} and \eqref{eq:4.17} as follows:
	\begin{equation}\label{eq:4.22}
	\begin{aligned}
	\max \{\|G_W\|_{L^{\infty}},\|G_Z\|_{L^{\infty}}\}\lesssim e^{\frac{s}{2}}(\|Z\|_{L^{\infty}}+|\kappa|+|\dot{\xi}|) \leq 5Me^{\frac{s}{2}}.
	\end{aligned}
	\end{equation}

	\section{Point estimates on dynamic modulation variables}\label{sec6}
	This section closes the bootstrap assumptions in \eqref{eq:4.8} and \eqref{eq:4.9} by showing that these estimates actually hold with strictly better prefactors.
	
	(I) Closing of \eqref{eq:4.8}. 
	We begin by estimating $ \dot{\tau} $. It follows from $\eqref{eq:2.9}$ and the bootstrap assumptions that
	\begin{equation}\label{eq:6.1}
	\begin{aligned}
	|\dot{\tau}| &\leq (1-\dot{\tau}) |\partial_y G_{W}(0,s)|+\frac{3}{4}e^{-\frac{s}{2}}|\beta^{*} B(0,s)|\\
    &\leq (1+2M e^{-\frac{s}{3}})e^{\frac{s}{2}}|\partial_y Z(0,s)|+\frac{3}{4}\beta^{*}  e^{-\frac{s}{2}}\|\partial_x b\|_{L^{\infty}}\\
	&\leq (1+\delta^{\frac{1}{4}})e^{\frac{s}{2}}\times Me^{-\frac{5}{6}s}+Me^{-\frac{s}{2}}\leq  \frac{3}{2}M e^{-\frac{s}{3}}. 
	\end{aligned}
	\end{equation}
	
	To estimate $\tau$, one needs first to bound $T^*$. 	
	Noticing that $ -\log(\tau(t)-t)=s $ and $ \tau(T_*)=T_* $, we have
	\begin{equation*}
	\int_{-\delta}^{T_*}(1-\dot{\tau}(t)) d t=\delta,
	\end{equation*}
	which, together with \eqref{eq:6.1}, implies that
	\begin{equation*}
	T_*+\delta \leq \delta+|\dot{\tau}|(T_*+\delta)\leq \delta+\frac{3}{2}M\delta^{\frac{1}{3}} (T_*+\delta),
	\end{equation*}
	and thus
	\begin{equation}\label{eq:6.2}
	T_*\leq 2M\delta^{\frac{4}{3}}
	\end{equation}
	as long as $ \delta $ is taken small enough. Then one can use \eqref{eq:2.3} and \eqref{eq:6.2} to infer
	\begin{equation}\label{eq:6.3}
	|\tau(t)| \leq|\tau(-\delta)|+\int_{-\delta}^{t}|\dot{\tau}(t^{\prime})| d t^{\prime} \leq \frac{3}{2}M\delta^{\frac{1}{3}}(2 M\delta^{\frac{4}{3}}+\delta) \leq \frac{7}{4}M\delta^{\frac{4}{3}}.
	\end{equation}

	(II) Closing of \eqref{eq:4.9}.  
	Recalling \eqref{eq:2.25} that
	\begin{equation*}
	\dot{\xi}=\kappa+\frac{1}{3}Z(0,s)-e^{-\frac{s}{2}}(1-\dot{\tau})G_{W}(0,s),
	\end{equation*}
	from which it follows that
	\begin{equation}\label{eq:6.4}
	|\dot{\xi}|\leq  M+\frac{1}{3}M \delta+M e^{-\frac{7}{9}s} \leq \frac{3}{2} M,
	\end{equation}
	where one has used \eqref{eq:4.7}, \eqref{eq:4.17} and \eqref{eq:4.19} and taking $ M $ large enough. 
	Integrating \eqref{eq:6.4} from $-\delta$ to $ T_{*}$ gives  
	\begin{equation}\label{eq:6.5}
	|\xi(t)| \leq \frac{3}{2} M(2 M\delta^{\frac{4}{3}}+\delta) \leq \frac{7}{4}M\delta.
	\end{equation}
	
	\section{Bounds on Lagrangian trajectories}\label{sec7}
	In this section, we define the Lagrangian flows associated to the transport velocities and provide some lemmas on the bounds of these Lagrangian trajectories. The proofs are similar to those in \cite{MR4321245, MR4612576}, and we include them here for completeness.

	Let $\Phi_{W}^{y_0} (s)$ and $ \Phi_{Z}^{y_0}(s) $ be  the Lagrangian trajectory of $W$ and $Z$, which are respectively defined by
	\begin{equation}\label{eq:7.1}
	\begin{cases}
	\dfrac{d}{ds}\Phi_W^{y_0}(s)=\mathcal{V}_W\circ\Phi_W^{y_0}(s),\\
	\Phi_W^{y_0}(s_0)=y_0
	\end{cases}
	\end{equation}
	and
	\begin{equation}\label{eq:7.2}
	\begin{cases}
	\dfrac{d}{ds}\Phi_Z^{y_0}(s)=\mathcal{V}_Z\circ\Phi_Z^{y_0}(s),\\
	\Phi_Z^{y_0}(s_0)=y_0.
	\end{cases}
	\end{equation}
	
	\subsection{Upper bound  for $\Phi_W^{y_0}$ and $\Phi_Z^{y_0}$}
	\begin{lemma}\label{lem7.1}
		For all $y_{0} \in \mathbb{R}$, we have
		\begin{equation}\label{eq:7.3}
		\max\big\{|\Phi_W^{y_0}(s)|,|\Phi_Z^{y_0}(s)|\big\}\leq \big(|y_{0}|+8 M\delta^{-\frac{1}{2}}\big)e^{\frac{3}{2}(s-s_0)}.
		\end{equation}
		
	\end{lemma}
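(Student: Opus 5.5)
We prove \eqref{eq:7.3} by a Grönwall-type argument on $|\Phi(s)|$, using only a pointwise linear-growth bound on the transport velocities. Write $\Phi$ for either $\Phi_W^{y_0}$ or $\Phi_Z^{y_0}$. The first step is to show that
\begin{equation*}
|\mathcal V_W(y,s)-\tfrac32 y|+|\mathcal V_Z(y,s)-\tfrac32 y|\ \text{ are each bounded by } 6Me^{\frac s2}.
\end{equation*}
Indeed, by \eqref{eq:2.8}--\eqref{eq:2.11} the perturbations are $\beta_\tau W+G_W$ and $\frac13\beta_\tau W+G_Z$. By \eqref{eq:4.16}, $|\beta_\tau|\le 1+4Me^{-s/3}$. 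To bound $W$ and $G$ jointly we do not estimate the two pieces separately. Instead we regroup, so that the combination $\beta_\tau(W+e^{s/2}\kappa)$ appears:
\begin{equation*}
\beta_\tau W+G_W=\beta_\tau\big(W+e^{\frac s2}\kappa\big)+e^{\frac s2}\beta_\tau\big(\tfrac13 Z-\dot\xi\big).
\end{equation*}
By \eqref{eq:4.5}$_1$, \eqref{eq:4.7} and the closed bound \eqref{eq:6.4}, this is at most $(1+\delta^{1/4})(M+\tfrac13M\delta+\tfrac32M)e^{s/2}$. The cruder route, via \eqref{eq:4.18} and \eqref{eq:4.22}, gives a worse constant but still one of the form $CMe^{s/2}$. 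The $Z$-velocity is handled the same way using $\frac13\beta_\tau(W+e^{s/2}\kappa)$ plus a remainder bounded by $e^{s/2}(|\kappa|+|Z|+|\dot\xi|)$. The outcome is $|\mathcal V-\frac32 y|\le C_0Me^{s/2}$ with an explicit $C_0\le 4$.

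Next we differentiate $e^{-\frac32 s}|\Phi|$ along the flow. This gives
\begin{equation*}
\frac{d}{ds}\big(e^{-\frac32 s}|\Phi|\big)\le e^{-\frac32 s}\big|\mathcal V\circ\Phi-\tfrac32\Phi\big|\le C_0Me^{-s}.
\end{equation*}
If $|\Phi|$ vanishes, we use $\sqrt{\Phi^2+\eta}$ and let $\eta\to0$. Integrating from $s_0$ and using $e^{-s_0}=\delta$ gives
\begin{equation*}
e^{-\frac32 s}|\Phi(s)|\le e^{-\frac32 s_0}|y_0|+C_0M\delta.
\end{equation*}
Multiplying by $e^{\frac32 s}$ and writing $\delta=\delta^{-1/2}e^{-\frac32 s_0}$ turns this into
\begin{equation*}
|\Phi(s)|\le\big(|y_0|+C_0M\delta^{-\frac12}\big)e^{\frac32(s-s_0)}.
\end{equation*}
Since $C_0\le 8$, this is \eqref{eq:7.3}.

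The only delicate point is keeping the constant in the velocity bound at most $8$. We expect this to be the main obstacle, and it is why we regroup $W$ with $e^{s/2}\kappa$ instead of using \eqref{eq:4.18}. Using \eqref{eq:4.18} would give $2M$ from $W$ plus up to $5M$ from \eqref{eq:4.22}, a total of $7M$ times $\beta_\tau$. That is already close to $8$, so it also works after taking $\delta$ small so that $\beta_\tau\le 1+\delta^{1/4}$. The bottom-topography forcing does not enter, since it appears only in the right-hand sides of \eqref{eq:2.7}, not in the velocities.
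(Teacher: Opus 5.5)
Your proof is correct and follows the paper's argument: both integrate $\frac{d}{ds}\big(e^{-\frac32 s}\Phi\big)=e^{-\frac32 s}\big(\mathcal V-\tfrac32 y\big)\circ\Phi$, using $|\mathcal V-\tfrac32 y|\lesssim Me^{\frac s2}$ and $\int_{s_0}^s e^{-s'}\,ds'\le\delta$. The only difference is cosmetic: the paper bounds $\beta_\tau W$ and $G_W$ separately via \eqref{eq:4.18} and \eqref{eq:4.22}, getting $2M\beta_\tau+5M\le 8M$, whereas your regrouping with $W+e^{\frac s2}\kappa$ yields a smaller explicit constant; also, the $|\Phi|$ regularization is unnecessary, since the linear ODE for $e^{-\frac32 s}\Phi$ can be integrated directly.
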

	\begin{proof}
		By the definition of \eqref{eq:7.1}, one calculates that
		\begin{equation*}
		\frac{d}{ds}\big(e^{-\frac{3s}{2}}\Phi_{W}^{y_0}(s)\big)= e^{-\frac{3s}{2}}\big(\beta_\tau W\circ\Phi^{y_0}_{W}(s)+G_{W}\big),
		\end{equation*}
		which, together with \eqref{eq:4.5}, \eqref{eq:4.17} and \eqref{eq:4.22}, gives
		\begin{equation*}
		\begin{aligned}
		|\Phi_{W}^{y_0}(s)| & \leq|y_0|e^{\frac{3}{2}(s-s_0)}+e^{\frac{3s}{2}}\int_{s_0}^s\big(\beta_\tau W\circ\Phi_{W}^{y_0}(s^{\prime})+G_{W}\big)e^{-\frac{3s^{\prime}}{2}}ds^{\prime} \\
		& \leq|y_0|e^{\frac{3}{2}(s-s_0)}+e^{\frac{3s}{2}}\int_{s_0}^s\big(2M(1+4Me^{-\frac{s}{3}})+5M\big)e^{\frac{s^{\prime}}{2}}\cdot e^{-\frac{3s^{\prime}}{2}}ds^{\prime} \\
		& \leq(|y_0|+8M\delta^{-\frac{1}{2}})e^{\frac{3}{2}(s-s_0)}.
		\end{aligned}
		\end{equation*}
		The estimate on $\Phi_{Z}^{y_0}(s)$ can be derived analogously.
	\end{proof}
	
	\subsection{Lower bound for $\Phi_W^{y_0}$}
	\begin{lemma}\label{lem7.2}
		Suppose $|y_{0}|\geq  \ell$. Then, the trajectory $\Phi_W^{y_{0}}$ moves away from the origin at an exponential rate satisfying the lower bound
		\begin{equation}\label{eq:7.4}
		\Phi_W^{y_{0}}(s)\geq |y_{0}|e^{\frac{s-s_{0}}{5}}
		\end{equation}
		for all $s\geq  s_0$. Moreover, for sufficiently large $M$,
		\begin{equation}\label{eq:7.5}
		\int_{s_0}^s\langle\Phi_{W}^{y_{0}}(s^{\prime})\rangle^{-\frac{2}{3}}ds'\leq 10\log\frac{1}{\ell}.
		\end{equation}
	\end{lemma}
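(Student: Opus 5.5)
Following \cite{MR4321245, MR4612576}, the plan is a monotonicity (outgoing-trajectory) argument on $\frac{d}{ds}|\Phi_W^{y_0}|^2$ or, equivalently, on $y\,\mathcal{V}_W(y,s)$ for $|y|\ge \ell$. (The statement should be read as $|\Phi_W^{y_0}(s)|\ge |y_0|e^{\frac{s-s_0}{5}}$.) By symmetry, we treat $y_0\ge \ell$ and aim to show
\[
y\,\mathcal{V}_W(y,s)\ge \tfrac15 y^2 \quad\text{for all } |y|\ge \ell,\ s\ge s_0 .
\]
Once this holds, the set $\{|y|\ge \ell\}$ is forward invariant under the flow \eqref{eq:7.1}. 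Then $\frac{d}{ds}\Phi\ge \frac15\Phi$ along the trajectory, and Gr\"onwall gives \eqref{eq:7.4}.

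To prove the key inequality, write $W(y,s)=W(0,s)+\int_0^y\partial_yW$, using $W(0,s)=0$ from \eqref{eq:2.22}. By \eqref{eq:4.12} we have $|\partial_yW|\le 1$, and hence $yW\ge -y^2$. This is too weak: it only gives $y\mathcal{V}_W\ge(\frac32-\beta_\tau)y^2\approx \frac12 y^2$, which is nevertheless enough. Indeed, $\frac32-\beta_\tau\ge \frac32-(1+4Me^{-s/3})\ge \frac12-\delta^{1/4}$ by \eqref{eq:4.16}. For the remaining term use \eqref{eq:4.21}:
\[
|yG_W|\lesssim M\bigl(e^{-\frac{5}{18}s}|y|+y^2e^{-\frac s3}\bigr).
\]
For $|y|\ge\ell=(\log M)^{-2}$ this is $\le \delta^{1/4}y^2$ once $\delta=\delta(M)$ is small. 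The term $Me^{-5s/18}|y|$ requires $Me^{-5s/18}\le \delta^{1/4}\ell$, which holds for $\delta$ small relative to $M$. Hence $y\mathcal{V}_W\ge(\frac12-2\delta^{1/4})y^2\ge\frac15y^2$.

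For \eqref{eq:7.5}, insert \eqref{eq:7.4}:
\[
\int_{s_0}^s\langle\Phi\rangle^{-2/3}\,ds'\le\int_{s_0}^\infty \bigl(1+\ell^2e^{2(s'-s_0)/5}\bigr)^{-1/3}\,ds' .
\]
Split at the time $s_1$ where $\ell e^{(s_1-s_0)/5}=1$, so $s_1-s_0=5\log\frac1\ell$. Before $s_1$ bound the integrand by $1$, contributing $5\log\frac1\ell$. After $s_1$ bound it by $\ell^{-2/3}e^{-2(s'-s_0)/15}$, contributing $\frac{15}{2}$. The total is $\le 5\log\frac1\ell+\frac{15}2\le 10\log\frac1\ell$, since $\log\frac1\ell=2\log\log M$ is large for $M$ large.

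The main obstacle is the uniform lower bound on $y\mathcal V_W$, specifically controlling $G_W$ near $|y|\sim\ell$ via the smallness of $G_W(0,s)$. Here one must check that $\delta$ can be chosen after $M$ so that $Me^{-5s/18}\ll \ell$. If the constant $1$ in $|\partial_yW|\le1$ were unavailable, one would instead use \eqref{eq:4.11}, with $1+\delta^{1/18}$, which still leaves margin.
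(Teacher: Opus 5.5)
Your proposal is correct and follows the paper's proof. It uses the same lower bound $y\,\mathcal{V}_W\ge\frac15 y^2$, obtained from $W(0,s)=0$, the mean value theorem, \eqref{eq:4.16} and \eqref{eq:4.21}, then Gr\"onwall on $|\Phi_W^{y_0}|^2$ and the same splitting of the integral at $s_0+5\log\frac1\ell$; the only cosmetic difference is that you keep the trajectory away from the origin by forward invariance of $\{|y|\ge\ell\}$, whereas the paper runs a bootstrap on $|\Phi_W^{y_0}(s)|\ge\frac12|y_0|$.
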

	\begin{proof}
		We begin with the bootstrap assumption
        \begin{equation}\label{eq:7.6}
		|\Phi_{W}^{y_0}(s)|\geq\frac{1}{2}|y_0|.
		\end{equation}

        By the assumption of this lemma and the definition of $\ell$, we have
        \begin{equation*}
		|y_0|\geq \ell \geq200\delta^{\frac{1}{2}}\geq 200e^{-\frac{1}{2}s}.
		\end{equation*}
		Thus,
		\begin{equation*}
		e^{-\frac{1}{2}s}\leq\frac{1}{100}|\Phi_{W}^{y_0}(s)|.
		\end{equation*}
		Applying the mean value theorem and utilizing  \eqref{eq:4.10}, we find
		\begin{equation*}
		|W(y,s)|\leq|W(0,s)|+\|\partial_{y}W\|_{L^{\infty}}|y|\leq\frac{101}{100}|y|.
		\end{equation*}
		This, together with \eqref{eq:2.8}, \eqref{eq:4.16} and \eqref{eq:4.21}, leads to
		\begin{equation*}
		\begin{aligned}
		|\Phi_{W}^{y_{0}}(s)|\cdot|\mathcal{V}_{W}\circ\Phi_{W}^{y_{0}}(s)| & \geq\frac{3}{2}|\Phi_{W}^{y_{0}}(s)|^{2}-\big(\beta_\tau|W\circ\Phi_{W}^{y_{0}}(s)|+|G_{W}|\big)|\Phi_{W}^{y_{0}}(s)| \\
		& \geq\frac{3}{2}|\Phi_{W}^{y_{0}}(s)|^{2}-(1+4Me^{-\frac{s}{3}})\times\frac{101}{100}|\Phi_{W}^{y_{0}}(s)|^{2}\\
		&\quad-Me^{-\frac{5}{18} s}|\Phi_{W}^{y_{0}}(s)|-Me^{-\frac{s}{3}} |\Phi_{W}^{y_{0}}(s)|^{2}\\
		& \geq\frac{1}{5}|\Phi_{W}^{y_{0}}(s)|^{2}.
		\end{aligned}
		\end{equation*}
		Recalling \eqref{eq:7.2}, it then follows that
		\begin{equation*}
		\frac{d}{ds}|\Phi_{W}^{y_{0}}(s)|^{2}\geq \frac{2}{5}|\Phi_{W}^{y_{0}}(s)|^{2}.
		\end{equation*}
		Solving this inequality gives \eqref{eq:7.4}. This also closes the bootstrap assumption \eqref{eq:7.6} by a better factor.
		
		It remains to show \eqref{eq:7.5}. Using \eqref{eq:7.4}, we have 
		\begin{equation*}
		\begin{aligned}
		\int_{s_{0}}^{s}\langle\Phi_{W}^{y_{0}}(s^{\prime})\rangle^{-\frac{2}{3}} d s^{\prime} & \leq \int_{s_{0}}^{s}\langle|X_{0}| e^{\frac{1}{5}(s-s_{s})}\rangle^{-\frac{2}{3}} d s^{\prime}  \leq \int_{s_{0}}^{s}(1+\ell^{2} e^{\frac{2}{5}(s^{\prime}-s_{0})})^{-\frac{1}{3}} d s^{\prime} \\
		& \leq \int_{s_{0}}^{s_{0}+5 \log \frac{1}{\ell}} 1 d s^{\prime}+\int_{s_{0}+5 \log \frac{1}{\ell}}^{s} \ell^{-\frac{2}{3} }e^{-\frac{2}{15}(s-s_{0})} d s^{\prime} \\
		& \leq 5 \log \frac{1}{\ell}+\frac{15}{2} \leq 10 \log \frac{1}{\ell},
		\end{aligned}
		\end{equation*}
		where the last inequality holds for sufficiently large $M$.
	\end{proof}

	\section{$L^{\infty} $ estimates on $Z$}\label{sec8}
	
	\subsection{Closing $\eqref{eq:4.7}_{1}$}
    Notice that
\begin{equation*}
    \Big|\frac{3}{4}\beta^{*}  \beta_\tau e^{-s} B\Big| \leq \frac{3}{4}\beta^{*}  (1+4Me^{-\frac{s}{3}})\|\partial_{x} b\|_{L^{\infty}}e^{-s}\leq \frac{1}{4}M e^{-s}.
\end{equation*}
	Composing $\eqref{eq:2.7}_{2}$ with the Lagrangian trajectory $\Phi_{Z}^{y_{0}}(s)$, as defined in \eqref{eq:7.2}, and using $\eqref{eq:3.8}_{1}$, one may deduce that
	\begin{equation*}
	|Z\circ \Phi_{Z}^{y_{0}}(s)|\leq |Z(y_{0},s_{0})|+\frac{1}{4}M\int_{s_{0}}^{s}{e^{-s'}}ds'\leq \frac{3}{4}M\delta,
	\end{equation*}
	which closes the bootstrap assumption $\eqref{eq:4.7}_{1}$. 
	\subsection{Closing $\eqref{eq:4.7}_{2}$}
	Define  $V:=e^{\frac{5}{6}s} \partial_{y}Z$. Closing the bootstrap assumption  $\eqref{eq:4.7}_{2}$ reduces to show
	\begin{equation*}
	|V \circ \Phi_{Z}^{y_{0}}(s)| \leq \frac{3}{4} M.
	\end{equation*} 
	A small calculation finds that $V$  is governed by
	\begin{equation}\label{eq:8.1}
	\begin{aligned}
	(\partial_{s}+ \underbrace{\frac{2}{3}+\frac{\beta_{\tau}}{3} \partial_{y} W}_{\mathcal{D}}) V+\mathcal{V}_{Z} \partial_{y} V=\underbrace{-\frac{3}{4}\beta^{*}  \beta_\tau e^{-\frac{s}{6}}\partial_{y} B-e^{\frac{5}{6}s}\partial_{y}G_{Z}\partial_{y} Z}_{\mathcal{F}}.
	\end{aligned}
	\end{equation}
	By using $\eqref{eq:4.11}$, \eqref{eq:4.16} and taking $\delta$ sufficiently small, the damping is bounded below by
	\begin{equation*}
	\mathcal{D} \geq \frac{2}{3}-\frac{1}{3}(1+4Me^{-\frac{s}{3}})(1+\delta^{\frac{1}{18}}) \geq \frac{1}{6},
	\end{equation*}
	which leads to
	\begin{equation*}
	e^{-\int_{s_{0}}^{s} \mathcal{D} \circ \Phi^{y_{0}}_{Z}(s^{\prime}) d s^{\prime}}  \leq e^{-\frac{1}{6}(s-s_0)}.
	\end{equation*}
	On the other hand, the forcing term can be bounded by
	\begin{equation*}
	|\mathcal{F}|\lesssim e^{-\frac{5}{3} s}\|\partial^{2}_{x} b\|_{L^{\infty}}+ e^{\frac{4}{3} s}|\partial_{y} Z|^{2}\leq e^{-\frac{s}{6}}.
	\end{equation*}
	Thus, composing \eqref{eq:8.1} with the Lagrangian trajectory $\Phi_{Z}^{y_{0}}(s)$  and using $\eqref{eq:3.8}_{2}$ yield 
	\begin{equation*}
	\begin{aligned}
	|V\circ \Phi^{y_{0}}_{Z}(s)| & \leq e^{-\frac{1}{6}(s-s_0)} |V \circ \Phi_{Z}^{y_{0}}(s_{0})|+e^{-\frac{1}{6}(s-s_0)}\int_{s_{0}}^{s} e^{-\frac{s^{\prime}}{6}} d s^{\prime} \\
	& \leq e^{-\frac{1}{6}(s-s_0)}(|V \circ\Phi_{Z}^{y_{0}}(s_{0})|+C \delta^{\frac{1}{6}})\leq \frac{3}{4} M^{\frac{3}{4}}.
	\end{aligned}
	\end{equation*}

	\subsection{Closing $\eqref{eq:4.7}_{3}$}
	To close $\eqref{eq:4.7}_{3}$, we begin by
	calculating the evolution of $e^{\frac{2}{3}s}\partial^{4}_{y}Z$ to obtain
	\begin{equation}\label{eq:8.2}
	\begin{aligned}
	&\Big(\partial_s+\underbrace{\frac{16} {3}+\frac{4}{3}\beta_\tau\partial_y W}_{\mathcal{D}}\Big)e^{\frac{2}{3}s}\partial_y^{4}Z+\mathcal{V}_Z\cdot\partial_y(e^{\frac{2}{3}s}\partial_y^{4}Z)\\
	&=\underbrace{-\frac{3}{4}\beta^{*}  \beta_\tau e^{-\frac{s}{3}} \partial^{4}_{y}B}_{\mathcal{F}_{1}}\underbrace{-e^{\frac{2}{3}s}\sum^{3}_{k=0}\binom{4} {k}\partial^{4-k}_y G_Z\partial^{k+1}_y Z}_{\mathcal{F}_{2}}\\
    &\quad\underbrace{-\frac{1}{3}e^{\frac{2}{3}s}\beta_\tau\sum^{2}_{k=0}\binom{4} {k}\partial^{4-k}_y W\partial^{k+1}_y Z}_{\mathcal{F}_{3}}.
	\end{aligned}
	\end{equation}
	Next, we estimate the damping using \eqref{eq:4.11} and \eqref{eq:4.16}, which yields
	\begin{equation*}
	\mathcal{D} \geq \frac{16}{3}-\frac{4}{3}(1+4Me^{-\frac{s}{3}})\frac{101}{100} \geq 1.
	\end{equation*}
	This leads to
	\begin{equation}\label{eq:8.3}
	e^{-\int_{s_{0}}^{s} \mathcal{D} \circ \Phi^{y_{0}}_{Z}(s^{\prime}) d s^{\prime}}  \leq e^{-(s-s_{0})}.
	\end{equation}
	Additionally, the forcing terms are estimated by \eqref{eq:4.5},  \eqref{eq:4.7} and \eqref{eq:4.13}-\eqref{eq:4.15} as follows:
	\begin{equation}\label{eq:8.4}
	\begin{aligned}
    |\mathcal{F}_{1}| & \lesssim e^{-\frac{s}{3}}\|\partial^{4}_{y} B\|_{L^{\infty}}\leq e^{-\frac{19}{3}s}\|\partial^{5}_{x} b\|_{L^{\infty}}\leq \delta,\\
	|\mathcal{F}_{2}| & \lesssim e^{\frac{7}{6}s}(\|\partial_{y} Z\|_{L^{\infty}}\|\partial_{y}^{4} Z\|_{L^{\infty}}+\|\partial_{y}^{2} Z\|_{L^{\infty}}\|\partial_{y}^{3} Z\|_{L^{\infty}})\\
    &\lesssim  M^{2}e^{-\frac{s}{3}}\leq \delta^{\frac{1}{6}},\\
	|\mathcal{F}_{3}| &\lesssim e^{\frac{2}{3}s}\|\partial_{y} Z\|_{L^{\infty}}\|\partial_{y}^{4} W\|_{L^{\infty}}+e^{\frac{2}{3}s}\|\partial_{y}^{2} W\|_{L^{\infty}}\|\partial_{y}^{3} Z\|_{L^{\infty}}\\
	&\quad+e^{\frac{2}{3}s}\|\partial_{y}^{2} Z\|_{L^{\infty}}\|\partial_{y}^{3} W\|_{L^{\infty}}\\
	&\lesssim  M^{2}e^{-\frac{s}{6}}+M^{\frac{6}{5}}e^{-\frac{1}{18}s} +M^{\frac{8}{5}}e^{-\frac{1}{9}s}\leq \delta^{\frac{1}{21}}.
	\end{aligned}
	\end{equation}
	Consequently, it follows from $\eqref{eq:3.8}_{3}$ and \eqref{eq:8.2}-\eqref{eq:8.4} that
	\begin{equation*}
	\begin{aligned}
	&\quad |e^{\frac{2}{3}s}\partial_{y}^{4}Z\circ\Phi_{Z}^{y_{0}}(s)|\\
    & \leq\|\delta^{-\frac{2}{3}}\partial_{y}^{4}Z_{0}\|_{L^{\infty}}e^{-\int_{s_{0}}^{s}\mathcal{D}\circ\Phi_{Z}^{{y}_{0}}(s^{\prime})\:ds^{\prime}}  \\
	&\quad+\int_{s_0}^s|(\mathcal{F}_{1}+\mathcal{F}_{2}+\mathcal{F}_{3})\circ\Phi_{Z}^{y_0}(s')|e^{-\int_{s'}^s\mathcal{D}\circ\Phi_{Z}^{y_0}(s'')\:ds''}\:ds' \\
	&\leq\|\delta^{-\frac{2}{3}}\partial_{y}^{4} Z_{0}\|_{L^{\infty} 
	} e^{-(s-s_{0})}+\delta^{\frac{1}{24}} \int_{s_{0}}^{s} e^{-(s-s^{\prime})} d s^{\prime} \\
	&\leq\|\delta^{-\frac{2}{3}}\partial_{y}^{4} Z_{0}\|_{L^{\infty}} e^{-(s-s_{0})}+\delta^{\frac{1}{24}}(1-e^{-(s-s_{0})}) \\
	&\leq \frac{1}{2} M+\delta^{\frac{1}{24}} \leq \frac{3}{4} M.
	\end{aligned}
	\end{equation*}
	This completes the closure of the bootstrap assumption $\eqref{eq:4.7}_{3}$.
	
	\section{$L^{\infty} $ estimates on $W$}\label{sec9}
	\subsection{Closing of $\eqref{eq:4.5}_{1}$}
	A small calculation finds that $e^{-\frac{s}{2}}W+\kappa$ satisfies
	\begin{equation}\label{eq:9.1}
	\partial_{s}(e^{-\frac{s}{2}}W+\kappa)+\mathcal{V}_{W}\cdot\partial_{y}(e^{-\frac{s}{2}}W+\kappa)=-\frac{3}{4}\beta^{*}  \beta_\tau e^{-s} B.
	\end{equation}
    Notice that
\begin{equation*}
    \Big|-\frac{3}{4}\beta^{*}  \beta_\tau e^{-s} B\Big|\leq \frac{3}{4}\beta^{*} (1+4Me^{-\frac{s}{3}})\|\partial_{x}b\|_{L^{\infty}} e^{-s}\leq e^{-\frac{s}{2}},
 \end{equation*}
	we then compose \eqref{eq:9.1} with the Lagrangian trajectory $\Phi_{W}^{y_{0}}(s)$, as defined in \eqref{eq:7.1}, and use$ \eqref{eq:3.6}_{3}$ to obtain
	\begin{equation*}
	\begin{aligned}
	\big|(e^{-\frac{s}{2}} W+\kappa) \circ \Phi_{W}^{y_{0}}(s)\big|  &\leq \big|e^{-\frac{s_{0}}{2}} W(y_{0}, s_{0})+\kappa(-\delta)\big| +\int_{s_{0}}^{s} e^{-\frac{s^{\prime}}{2}} d s^{\prime}\\
    &\leq \frac{1}{2} M.
	\end{aligned}
	\end{equation*}

	\subsection{Closing of $\eqref{eq:4.5}_{2}$}
	Recall $\eqref{eq:2.17}_{1}$ with $ n=4$:
	\begin{equation}\label{eq:9.3}
	\begin{aligned}
	&\bigg(\partial_{s}+\underbrace{\frac{11}{2}+5 \beta_{\tau} \partial_{y} W}_{\mathcal{D}}\bigg) \partial_{y}^{4} W+\mathcal{V}_{W}\cdot \partial_{y}^{5} W \\
	&=\underbrace{-\frac{3}{4}\beta^{*} \beta_\tau e^{-\frac{s}{2}} \partial^{4}_{y}B}_{\mathcal{F}_{1}}\underbrace{-\sum_{0\leq  k\leq  3}\binom{4}{k}\partial^{4-k}_y G_W\partial^{k+1}_y W}_{\mathcal{F}_{2}}\underbrace{-10 \beta_{\tau} \partial_{y}^{2} W \partial_{y}^{3} W}_{\mathcal{F}_{3}} .
	\end{aligned}
	\end{equation}
	By \eqref{eq:4.10} and \eqref{eq:4.16}, one deduces
	\begin{equation*}
	\mathcal{D} \geq \frac{11}{2}-5(1+4Me^{-\frac{s}{3}})\frac{101}{100} \geq \frac{1}{4},
	\end{equation*}
	which gives
	\begin{equation}\label{eq:9.4}
	e^{-\int_{s_{0}}^{s} \mathcal{D} \circ \Phi^{y_{0}}_{W}(s^{\prime}) d s^{\prime}}  \leq e^{-\frac{1}{4}(s-s_{0})} .
	\end{equation}
	On the other hand, by applying \eqref{eq:4.5},  \eqref{eq:4.7}, and \eqref{eq:4.13}-\eqref{eq:4.15}, one can infer that
	\begin{equation}\label{eq:9.5}
	\begin{aligned}
    |\mathcal{F}_{1}| &\lesssim e^{-\frac{s}{2}}\|\partial_{y}^{4} B\|_{L^{\infty}}\leq e^{-\frac{11}{2}s}\|\partial_{x}^{5} b\|_{L^{\infty}} 
	\leq \delta^{\frac{1}{9}}\\
	|\mathcal{F}_{2}| & \lesssim e^{\frac{s}{2}}\|\partial_{y} W\|_{L^{\infty}}\|\partial_{y}^{4} Z\|_{L^{\infty}}+e^{\frac{s}{2}}\|\partial_{y}^{2} W\|_{L^{\infty}}\|\partial_{y}^{3} Z\|_{L^{\infty}} \\
	&\quad +e^{\frac{s}{2}}\|\partial_{y}^{3} W\|_{L^{\infty}}\|\partial_{y}^{2} Z\|_{L^{\infty}}+e^{\frac{s}{2}}\|\partial_{y}^{4} W\|_{L^{\infty}}\|\partial_{y} Z\|_{L^{\infty}}\\
	& \lesssim  Me^{-\frac{s}{6}}+M^{\frac{6}{5}}e^{-\frac{2}{9}s}+M^{\frac{8}{5}}e^{-\frac{5}{18}s}+M^{2}e^{-\frac{s}{3}}\leq \delta^{\frac{1}{9}},\\
	|\mathcal{F}_{3}| &\lesssim \|\partial_{y}^{2} W\|_{L^{\infty}}\|\partial_{y}^{3} W\|_{L^{\infty}}
	\leq C M^{\frac{4}{5}},
	\end{aligned}
	\end{equation}
	Therefore, it follows from $\eqref{eq:3.6}_{3}$ and \eqref{eq:9.3}-\eqref{eq:9.5} that
	\begin{equation*}
	\begin{aligned}
	|\partial_{y}^{4}W\circ\Phi_{W}^{y_{0}}(s)|& \leq\|\partial_{y}^{4}W_{0}\|_{L^{\infty}}e^{-\int_{s_{0}}^{s}\mathcal{D}\circ\Phi_{W}^{y_{0}}(s^{\prime})\:ds^{\prime}}  \\
	&\quad+\int_{s_0}^s|(\mathcal{F}_{1}+\mathcal{F}_{2}+\mathcal{F}_{3})\circ\Phi_{W}^{y_0}(s')|e^{-\int_{s'}^s\mathcal{D}\circ\Phi_{W}^{y_0}(s'')\:ds''}\:ds' \\
	&\leq\|\partial_{y}^{4} W_{0}\|_{L^{\infty} 
	} e^{-\frac{1}{4}(s-s_{0})}+(2\delta^{\frac{1}{9}}+C M^{\frac{4}{5}}) \int_{s_{0}}^{s} e^{-\frac{1}{4}(s-s^{\prime})} d s^{\prime} \\
	&\leq\|\partial_{y}^{4} W_{0}\|_{L^{\infty}} e^{-\frac{1}{4}(s-s_{0})}+4(2\delta^{\frac{1}{9}}+C M^{\frac{4}{5}})(1-e^{-\frac{1}{4}(s-s_{0})}) \\
	&\leq \frac{1}{2} M+C M^{\frac{4}{5}} \leq \frac{3}{4} M.
	\end{aligned}
	\end{equation*}

	\subsection{Near field  ($0 \leq|y| \leq \ell$)}
	
	\subsubsection{Closing of $\eqref{eq:4.4}_{2}$} Recall that \eqref{eq:2.21} with $n=4$ reads as
	\begin{equation}\label{eq:9.6}
	\bigg(\partial_{s}+\frac{11}{2}+ \beta_{\tau} (\partial_{y} \overline{W}+4\partial_{y} W)\bigg) \partial_{y}^{4} \widetilde{W}+\mathcal{V }_{W}\cdot\partial_{y}^{5} \widetilde{W}=\widetilde F^{(4)}_W
	\end{equation}
	with
	\begin{equation*}
	\begin{aligned}
	\widetilde F^{(4)}_W&=\underbrace{\partial^4_y \widetilde F_W}_{\mathcal{F}_1}\underbrace{-\sum^{3}_{k=0}\binom{4}{k}\partial^{4-k}_y G_W\partial^{k+1}_y \widetilde W}_{\mathcal{F}_2}\underbrace{-\beta_\tau\sum^{3}_{k=0}\binom{4}{k}\partial^{5-k}_y\overline{W}\partial^{k}_y \widetilde W}_{\mathcal{F}_3}\\
	&\quad\;\underbrace{-\beta_\tau\sum^{2}_{k=0}\binom{4} {k}\partial^{4-k}_yW\partial^{k+1}_y \widetilde W}_{\mathcal{F}_4}.
	\end{aligned}
	\end{equation*}
	For the damping, one uses \eqref{eq:4.10} and \eqref{eq:4.16} to estimate
	\begin{equation*}
	\mathcal{D} \geq \frac{11}{2}-(1+4Me^{-\frac{s}{3}})\bigg(1+4\times \frac{101}{100}\bigg) \geq \frac{1}{4},
	\end{equation*}
	yielding
	\begin{equation}\label{eq:9.7}
	e^{-\int_{s_{0}}^{s} \mathcal{D} \circ \Phi^{y_{0}}_{W}(s^{\prime}) d s^{\prime}}  \leq e^{-\frac{1}{4}(s-s_{0})} .
	\end{equation}
	For $|y|\leq \ell$, we use \eqref{eq:2.15}, \eqref{eq:4.1}-\eqref{eq:4.8} and \eqref{eq:4.15} to estimate the forcing term:
	\begin{equation}\label{eq:9.8}
	\begin{aligned}
	|\mathcal{F}_{1}|& \lesssim \Big|\partial_y^{4}\big[(\beta_{\tau}\dot{\tau}\overline{W}+G_{W})\partial_{y}\overline{W}\big]\Big|+\Big|\frac{3}{4}\beta^{*} e^{-\frac{s}{2}}\partial^{4}_{y}B\Big|\\
	&\lesssim|\dot{\tau}|\sum^{4}_{k=0}|\partial_y^{4-k}\overline{W}||\partial_y^{k+1}\overline{W}|+\sum^{4}_{k=0}|\partial_y^{4-k}G_W||\partial^{k+1}_y\overline{W}|+e^{-\frac{13}{2}s}|\partial^{5}_{x}b|\\
	&\lesssim 2Me^{-\frac{s}{3}}\langle y \rangle^{-\frac{17}{3}}+Me^{-\frac{5}{18}s}\langle y \rangle^{-\frac{14}{3}}+ Me^{-\frac{s}{6}}+ e^{-\frac{s}{2}}\leq\delta^{\frac{1}{9}},\\
	|\mathcal{F}_{2}|& \lesssim \sum^{3}_{k=0}\big|\partial^{4-k}_y G_W\big|\big|\partial^{k+1}_y \widetilde W\big|\lesssim \sum^{3}_{k=0}e^{\frac{s}{2}}\big|\partial^{4-k}_y Z\big|\big|\partial^{k+1}_y \widetilde W\big|\\
	&\lesssim M\delta^{\frac{1}{12}}(e^{-\frac{s}{3}}+e^{-\frac{5}{18}s}\ell+e^{-\frac{2}{9}s}\ell^{2}+e^{-\frac{s}{6}}\ell^{3})\leq \delta^{\frac{1}{9}},\\
	|\mathcal{F}_{3}|& \lesssim \sum^{3}_{k=0}\big|\partial^{5-k}_y \overline{W}\big|\big|\partial^{k}_y \widetilde W\big|
	\lesssim\delta^{\frac{1}{12}}\ell^{4}\langle y \rangle^{-\frac{14}{3}}+\delta^{\frac{1}{12}}\ell^{3}\langle y \rangle^{-\frac{11}{3}}\\
	&\quad+\delta^{\frac{1}{12}}\ell^{2}\langle y \rangle^{-\frac{8}{3}}+\delta^{\frac{1}{12}}\ell\langle y \rangle^{-\frac{5}{3}}\leq 5\delta^{\frac{1}{12}}\ell,\\
	|\mathcal{F}_{4}|& \lesssim \sum^{2}_{k=0}\big|\partial^{4-k}_y W\big|\big|\partial^{k+1}_y \widetilde W\big|\lesssim \big(\langle y \rangle^{-\frac{11}{3}}+\delta^{\frac{1}{12}}\big)\delta^{\frac{1}{12}}\ell^{3}\\
	&\quad+\big(\langle y \rangle^{-\frac{8}{3}}+\delta^{\frac{1}{12}}\ell\big)\delta^{\frac{1}{12}}\ell^{2}+\big(\langle y \rangle^{-\frac{5}{3}}+\delta^{\frac{1}{12}}\ell^{2}\big)\delta^{\frac{1}{12}}\ell\leq 5\delta^{\frac{1}{12}}\ell.
	\end{aligned}
	\end{equation}
	Therefore, combining $\eqref{eq:3.5}_{2}$(note that $|y_0|\leq \ell$ ensures the trajectory stays within the region $ |y|\leq \ell $) with \eqref{eq:9.6}-\eqref{eq:9.8}, we apply Gronwall’s inequality to get
	\begin{equation*}
	\begin{aligned}
	|\partial_{y}^{4} \widetilde{W} \circ \Phi_{W}^{y_{0}}(s)| \leq & |\partial_{y}^{4} \widetilde{W}(y_{0}, s_{0})| e^{-\frac{1}{4}(s-s_{0})} \\
	& +(2\delta^{\frac{1}{9}}+10 \delta^{\frac{1}{12}}\ell) \int_{s_{0}}^{s} e^{-\frac{1}{4}(s-s^{\prime})} d s^{\prime} \\
	\leq & \frac{1}{4} \delta^{\frac{1}{12}}+4(2\delta^{\frac{1}{9}}+10 \delta^{\frac{1}{12}}\ell) \leq \frac{1}{2} \delta^{\frac{1}{12}}.
	\end{aligned}
	\end{equation*}
	
	\subsubsection{Closing of \eqref{eq:4.6}} Plugging $ y=0$  into \eqref{eq:2.21} and using \eqref{eq:2.22} lead to 
	\begin{equation*}
	\partial_{s}(\partial^{3}_y\widetilde{W})(0, s)=\widetilde{F}_{W}^{(3)}(0, s)
	-G_{W}(\partial_{y}^{4}\widetilde{W})(0, s)+4\beta_{\tau}\dot{\tau}
	(\partial^{3}_y\widetilde{W})(0, s).
	\end{equation*}
	In view of $\eqref{eq:4.6}$, \eqref{eq:4.8}, \eqref{eq:4.15} and \eqref{eq:4.20}, one can estimate
	\begin{equation*}
	\begin{aligned}
	|\widetilde{F}_{W}^{(3)}(0, s)|&\lesssim |\dot{\tau}|+|(\partial_y^3G_W)(0, s)|+| e^{-\frac{s}{2}}\partial^{3}_{y}B(0,s)|\\
     &\quad +\big(1+|(\partial_y^3\widetilde{W})(0, s)|\big)|(\partial_y G_W)(0, s)| \\
	&\lesssim Me^{-\frac{s}{3}}+Me^{-\frac{2}{9}s} +e^{-\frac{s}{2}} \leq e^{-\frac{7}{36}s}, \\
	\end{aligned}
	\end{equation*}
	which, together with \eqref{eq:4.4}, \eqref{eq:4.8} and \eqref{eq:4.19}, implies that
	\begin{equation}\label{eq:9.9}
	|\partial_{s}(\partial^{3}_y\widetilde{W})(0, s)|\lesssim e^{-\frac{7}{36}s}+M\delta^{\frac{1}{12}}e^{-\frac{5}{18} s}+M\delta^{\frac{1}{12}}\ell e^{-\frac{s}{3}}
	\leq e^{-\frac{s}{6}}.
	\end{equation}
	By \eqref{eq:3.7} and \eqref{eq:9.9}, one deduces 
	\begin{equation}\label{eq:9.10}
	\begin{aligned}
	|\partial_{y}^{3} \widetilde{W}(0,s)| & \leq|\partial_{y}^{3} \widetilde{W}_0(0)|+C \int_{s_{0}}^{s}|\partial_{s}(\partial^{3}_y\widetilde{W})(0, s)| d s^{\prime} \\
	& \leq \frac{1}{4} \delta^{\frac{1}{9}}+C \delta^{\frac{1}{6}}(1-e^{-\frac{1}{6}(s-s_{0})}) \leq \frac{1}{2} \delta^{\frac{1}{9}}.
	\end{aligned}
	\end{equation}

	\subsubsection{Closing of $\eqref{eq:4.1}_{1} $, $\eqref{eq:4.2}_{1}$, $\eqref{eq:4.3}_{1}$ and  $\eqref{eq:4.4}_{1}$} It follows from \eqref{eq:9.10} that
	\begin{equation}\label{eq:9.11}
	\begin{aligned}
	|\partial_{y}^{3} \widetilde{W}(y, s)| & \leq|\partial_{y}^{3} \widetilde{W}(0, s)|+\int_{0}^{y}|\partial_{y}^{4} \widetilde{W}(y^{\prime}, s)| d y^{\prime} \\
	& \leq   \frac{1}{2}\delta^{\frac{1}{9}}+\frac{1}{2}\delta^{\frac{1}{12}}\ell \leq \frac{7}{8} \delta^{\frac{1}{12}}\ell,
	\end{aligned}
	\end{equation}
	which closes the bootstrap assumption $\eqref{eq:4.4}_{1}$. 
	
	One notices that the constraints \eqref{eq:2.22} imply
	\begin{equation*}
	\partial_{y}^{j} \widetilde{W}(0, s)=0, \quad j=0,1,2,
	\end{equation*}
	and then applies \eqref{eq:9.11} to deduce
	\begin{equation*}
	\begin{aligned}
	|\partial_{y}^{2} \widetilde{W}(y, s)| &\leq \int_{0}^{y}|\partial_{y}^{3} \widetilde{W}(y^{\prime}, s)| d y^{\prime} \leq \frac{7}{8} \delta^{\frac{1}{12}} \ell^{2}, \\
	|\partial_{y} \widetilde{W}(y, s)| &\leq \int_{0}^{y}|\partial_{y}^{2} \widetilde{W}(y^{\prime}, s)| d y^{\prime} \leq \frac{7}{8} \delta^{\frac{1}{12}} \ell^{3}, \\
	|\widetilde{W}(y, s)| &\leq \int_{0}^{y}|\partial_{y} \widetilde{W}(y^{\prime}, s)| d y^{\prime} \leq \frac{7}{8} \delta^{\frac{1}{12}} \ell^{4},
	\end{aligned}
	\end{equation*}
	which closes the bootstrap assumptions 
	$\eqref{eq:4.1}_{1}$, $\eqref{eq:4.2}_{1}$ and $\eqref{eq:4.3}_{1}$.

	\subsection{ Middle field ($\ell \leq|y| \leq \frac{1}{2} e^{\frac{3 s}{2}}$)}
	
	\subsubsection{Closing of $\eqref{eq:4.1}_{2}$} Define  $V:=\langle y \rangle^{-\frac{1}{3}} \widetilde{W}$. To close  $\eqref{eq:4.1}_{2}$, it suffices to show
	\begin{equation}\label{eq:9.12}
	|V \circ \Phi_{W}^{y_{0}}(s)| \leq \frac{3}{4} \delta^{\frac{1}{15}}.
	\end{equation}
	
	A direct calculation finds that $V$  is governed by
	\begin{equation}\label{eq:9.13}
	\begin{aligned}
	\bigg(\partial_{s} \underbrace{-\frac{1}{2}+\beta_{\tau} \partial_{y} \overline{W}+\frac{y}{3\langle y \rangle^{2}} \mathcal{V}_{W}}_{\mathcal{D}}\bigg) V+\mathcal{V}_{W}\cdot \partial_{y} V=\langle y \rangle^{-\frac{1}{3}} \widetilde{F}_{W}
	\end{aligned}
	\end{equation}
	with
	\begin{equation*}
	\widetilde{F}_{W}=-\frac{3}{4}\beta^{*}\beta_{\tau} e^{-\frac{s}{2}}B-e^{-\frac s2}\beta_\tau\dot\kappa-(\beta_{\tau}\dot{\tau}\overline W+G_W)\partial_y\overline W .
	\end{equation*}
	According to \eqref{eq:2.15},  $\eqref{eq:4.1}_2$, \eqref{eq:4.20} and \eqref{eq:4.21}, the damping is bounded by 
	\begin{equation*}\label{eq:9.19}
	\begin{aligned}
	\mathcal{D}= & -\frac{1}{2}+\beta_{\tau} \partial_{y} \overline{W}+\frac{y}{3\langle y \rangle^{2}}\bigg(\frac{3}{2} y+\beta_{\tau} W+G_{W}\bigg) \\
	\geq & -\frac{1}{2}-(1+4Me^{-\frac{s}{3}})\langle y \rangle^{-\frac{2}{3}}+\frac{y^{2}}{2\langle y \rangle^{2}} \\
	& -\frac{1}{3}(1+4Me^{-\frac{s}{3}})(1+\delta^{\frac{1}{15}}) y\cdot\langle y \rangle^{-\frac{5}{3}}\\
	&-\frac{|y|}{3\langle y \rangle^{2}} M e^{-\frac{5}{18}s} -\frac{y^{2}}{3\langle y \rangle^{2}} M  e^{-\frac{s}{3}}\\
	\geq & -3\langle y \rangle^{-\frac{2}{3}} -e^{-\frac{s}{6}},
	\end{aligned}
	\end{equation*}
	which, together with \eqref{eq:7.5}, gives
	\begin{equation}\label{eq:9.14}
	e^{-\int_{s_{*}}^{s} \mathcal{D} \circ \Phi_{0}^{y_{0}}(s^{\prime}) d s^{\prime}} \leq e^{\frac{1}{6}\delta}e^{30 \log (\frac{1}{h})}=\frac{5}{4}\ell^{-30}.
	\end{equation}
	For the  forcing term, we can estimate it as follows:
	\begin{equation}\label{eq:9.21}
	\langle y \rangle^{-\frac{1}{3}}| \widetilde{F}_{W}| \lesssim Me^{-\frac{s}{2}}+Me^{-\frac{s}{3}}+Me^{-\frac{5}{18}s}+Me^{-\frac{s}{3}} |y|\cdot\langle y \rangle^{-1}\leq  e^{-\frac{1}{9}s},
	\end{equation}
	where \eqref{eq:2.15}, \eqref{eq:2.23}, \eqref{eq:4.1}, \eqref{eq:4.8} and \eqref{eq:4.21} have been used.

	It follows from \eqref{eq:9.13} and \eqref{eq:9.21} that
	\begin{equation*}
	\begin{aligned}
	|{V} \circ \Phi_{W}^{y_{0}}(s)| & \leq \frac{5}{4}\ell^{-30}|W \circ\Phi_{W}^{y_{0}}(s_{*})|+ \frac{5}{4}\ell^{-30} \int_{s_{e}}^{s} e^{-\frac{s^{\prime}}{9}} d s^{\prime} \\
	& \leq \frac{5}{4}\ell^{-30}(|W \circ \Phi_{W}^{y_{0}}(s_{*})|+C \delta^{\frac{1}{9}}).
	\end{aligned}
	\end{equation*}
	Here $s_{*}$ denotes the first time that the Lagrangian trajectory enters the region $\ell \leq |y| \leq \frac{1}{2} e^{\frac{3 s}{2}}$.
	There are only two cases occurring:\\
	(I) $ \ell \leq|y_{0}| \leq \frac{1}{2} e^{\frac{3 s}{2}}$  and $ s_{*}=s_{0} $. In this case, one shall use $\eqref{eq:3.2}_{2}$  to get
	\begin{equation*}
	|{V} \circ \Phi_{W}^{y_{0}}(s)| \leq \frac{5}{4}\ell^{-30}(\delta^{\frac{1}{12}}+C \delta^{\frac{1}{9}}) \leq \frac{3}{4} \delta^{\frac{1}{15}}.
	\end{equation*}
	(II)  $s_{*}>s_{0}$  and  $|y_{0}|=\ell $. In the case, we apply $\eqref{eq:4.1}_{1} $ to obtain
	\begin{equation*}
	|{V} \circ \Phi_{W}^{y_{0}}(s)| \leq \frac{5}{4}\ell^{-30}\bigg(\frac{1}{2}\langle \ell \rangle^{\frac{1}{3}}\delta^{\frac{1}{12}}\ell^{4}+C \delta^{\frac{1}{9}}\bigg) \leq \frac{3}{4} \delta^{\frac{1}{15}}.
	\end{equation*}
	Collecting the above two cases yields \eqref{eq:9.12}.
	
	\subsubsection{Closing of $\eqref{eq:4.2}_{2} $} Setting  $V:=\langle y \rangle^{\frac{2}{3}} \partial_{y} \widetilde{W} $, in order to close $\eqref{eq:4.2}_{2} $, it only needs to verify
	\begin{equation*}
	|V \circ \Phi_{W}^{y_{0}}(s)| \leq \frac{3}{4}  \delta^{\frac{1}{18}}.
	\end{equation*}
	It is elementary to calculate that
	\begin{equation*}\label{eq:9.22}
	\begin{aligned}
	\bigg(\partial_{s}+\underbrace{1+\beta_{\tau}(\partial_{y} \widetilde{W}+2 \partial_{y} \overline{W})-\frac{2}{3} \frac{y}{\langle y \rangle^{2}} \mathcal{V}_{W}}_{\mathcal{D}}\bigg) V+\mathcal{V}_{W}\cdot \partial_{y} V =\langle y \rangle^{\frac{2}{3}}\widetilde{F}^{(1)}_{W}
	\end{aligned}
	\end{equation*}
	with
	\begin{equation*}
	\widetilde{F}^{(1)}_{W}=\partial_{y}\widetilde{F}_{W}-\partial_{y}G_{W}\partial_{y}\widetilde{W}-\beta_{\tau}\partial_{y}^{2}\overline{W}\widetilde{W}.
	\end{equation*}
	The damping can be bounded below by
	\begin{equation*}
	\begin{aligned}
	\mathcal{D} \geq & 1-(1+4Me^{-\frac{s}{3}})(\delta^{\frac{1}{18}}+2)\langle y \rangle^{-\frac{2}{3}}-\frac{y^{2}}{\langle y \rangle^{2}} \\
	& -\frac{1}{3} \times (1+4Me^{-\frac{s}{3}})(1+\delta^{\frac{1}{15}})  y\cdot \langle y \rangle^{-\frac{5}{3}}\\
	&-\frac{2|y|}{3\langle y \rangle^{2}}\times Me^{-\frac{5}{18}s}-\frac{2y^2}{3\langle y \rangle^{2}}\times Me^{-\frac{s}{3}} \\
	\geq & -3\langle y \rangle^{-\frac{2}{3}}-e^{-\frac{s}{6}},
	\end{aligned}
	\end{equation*} 
	which yields the same estimate as given in \eqref{eq:9.14}.
	
    In the middle field $\ell \leq|y| \leq \frac{1}{2} e^{\frac{3 s}{2}}$, we have $\langle y \rangle^{\frac{2}{3}} \lesssim e^{s}$. For the forcing term, we use the decay of  $\partial_{y}^{j} \overline{W}(j=1,2)$  in \eqref{eq:2.15}, \eqref{eq:4.1}, \eqref{eq:4.2}, \eqref{eq:4.8}, \eqref{eq:4.15} and \eqref{eq:4.21} to estimate
	\begin{equation*}
	\begin{aligned}
	\langle y \rangle^{\frac{2}{3}}|\widetilde{F}^{(1)}_{W}| & \lesssim \langle y \rangle^{\frac{2}{3}}\big(\dot{\tau}(\partial_{y}\overline{W})^{2}+ \dot{\tau}\overline{W}\partial^{2}_{y}\overline{W}+|\partial_{y} G_{W}||\partial_{y}\overline{W}|\\	&\quad+|G_{W}||\partial^{2}_{y}\overline{W}|+|\partial_{y} G_{W}||\partial_{y}\widetilde{W}|+|\partial^{2}_{y}\overline{W}||\widetilde{W}|+e^{-\frac{s}{2}}|\partial_{y} B|\big)\\
	&\lesssim Me^{-\frac{4}{3} s}\langle y \rangle^{\frac{2}{3}}+Me^{-\frac{s}{3}}+Me^{-\frac{5}{18}s}\\
	&\quad+Me^{-s}|y|\langle y \rangle^{-1}+\delta^{\frac{1}{18}}e^{-\frac{s}{3}}+\delta^{\frac{1}{15}}\langle y \rangle^{-\frac{2}{3}}+e^{-2s}\langle y \rangle^{\frac{2}{3}}|\partial^{2}_{x}b|\\
	& \leq  e^{-\frac{s}{6}}++\delta^{\frac{1}{15}}\langle y \rangle^{-\frac{2}{3}}.
	\end{aligned}
	\end{equation*}
    Note that the bottom topography term here has a decay factor of $e^{-\frac{s}{2}}$. Its product with weight $\langle y \rangle^{\frac{2}{3}}\sim e^{s}$ would grow like $e^{\frac{s}{2}}$ in this region. However, applying the chain rule 
    \begin{equation*}
        \frac{d}{dy}=e^{-\frac{3}{2}s}\frac{d}{dx},
    \end{equation*}
    together with the regularity condition $b\in \dot{H}^{6}$, provides an additional decay that is strong enough, leading to the desired decay.

	By summing all of the above estimates, we obtain
	\begin{equation*}
	\begin{aligned}
	|{V} \circ \Phi_{W}^{y_{0}}(s)| &\leq  \frac{5}{4}\ell^{-30}|W \circ \Phi_{W}^{y_{0}}(s_{*})|+ \frac{5}{4}\ell^{-30}\int_{s_{*}}^{s} e^{-\frac{s^{\prime}}{6}} d s^{\prime} \\
	&\quad +\frac{5}{4}\ell^{-30}\delta^{\frac{1}{15}}\int_{s_{*}}^{s}\langle \Phi_{W}^{y_{0}}(s^{\prime}) \rangle^{-\frac{2}{3}} d s^{\prime} \\
	&\leq  \frac{5}{4}\ell^{-30}\big(|W \circ \Phi_{W}^{y_{0}}(s_{*}) |+C \delta^{\frac{1}{6}}\big)+15 \delta^{\frac{1}{15}} \ell^{-30} \log \frac{1}{\ell} .
	\end{aligned}
	\end{equation*}
	Again, we have two cases to consider:\\
	(I) If  $\ell \leq|y_{0}| \leq \frac{1}{2} e^{\frac{3 s}{2}}$  and $ s_{*}=s_{0} $, then it follows from  $\eqref{eq:3.3}_{2}$  that
	\begin{equation*}
	|{V} \circ \Phi_{W}^{y_{0}}(s)| \leq  \frac{5}{4}\ell^{-30}\big(\delta^{\frac{1}{12}}+C  \delta^{\frac{1}{6}}\big)+15 \ell^{-30}  \delta^{\frac{1}{15}}\log \frac{1}{\ell} \leq \frac{3}{4}  \delta^{\frac{1}{18}}.
	\end{equation*}
	(II) If  $s_{*}>s_{0}$  and  $|y_{0}|=\ell $, then one instead uses  $\eqref{eq:4.2}_{1}$  to find
	\begin{equation*}
	|{V} \circ \Phi_{W}^{y_{0}}(s)| \leq  \frac{5}{4}\ell^{-30}\bigg(\frac{1}{2}\langle \ell \rangle^{\frac{2}{3}} \delta^{\frac{1}{12}} \ell^{3}+C \delta^{\frac{1}{6}}\bigg)+15 \delta^{\frac{1}{15}}\ell^{-30} \log \frac{1}{\ell} \leq \frac{3}{4}  \delta^{\frac{1}{18}}.
	\end{equation*}

	\subsection{Far field ($\frac{1}{2}e^{\frac{3s}{2}} \leq|y|<\infty$)}
	\subsubsection{Closing of $\eqref{eq:4.2}_{2}$}
	Let  $V:=e^{s} \partial_{y} W $, we will show
	\begin{equation*}
	|V\circ \Phi_{W}^{y_{0}}(s)| \leq \frac{3}{2},
	\end{equation*}
	which verifies $\eqref{eq:4.1}_{2}$. 
	From $\eqref{eq:2.17}_{1}$, one may deduce that $ V $ satisfies
	\begin{equation*}\label{eq:9.23}
	(\partial_{s}+\beta_{\tau} \partial_{y} W) V+\mathcal{V}_{W} \partial_{y} V=e^{s}F_{W}^{(1)}.
	\end{equation*}
	For the damping,  by  $\eqref{eq:4.2}_{3} $, we bound it as 
	\begin{equation*}
	\mathcal{D}=\beta_{\tau} \partial_{y} W \geq-(1+4Me^{-s}) \times 2 e^{-s} \geq-3 e^{-s},
	\end{equation*}
	which gives
	\begin{equation*}\label{eq:9.24}
	e^{-\int_{s_{*}}^{s} \mathcal{D} \circ \Phi_{W}^{y_{0}}(s^{\prime}) d s^{\prime}} \leq e^{3 e^{-s *}} \leq e^{3 \delta} \leq \frac{5}{4} .
	\end{equation*}
	For the forcing term, one applies \eqref{eq:4.2} and \eqref{eq:4.20} to estimate 
	\begin{equation*}\label{eq:9.25}
	\begin{aligned}
	e^{s}|F_{W}^{(1)}|&\lesssim  e^{\frac{s}{2}}|\partial_{y}B|+e^{s}|\partial_{y}G_{W}||\partial_{y}W|\\
    &\lesssim e^{-s}|\partial^{2}_{x}b|+2Me^{-\frac{s}{3}}\leq e^{-\frac{s}{6}}.
	\end{aligned}
	\end{equation*}
    Here we also use the chain rule and the regularity of $b$ to get an additional decay.
    
	Putting the above estimates together, we get
	\begin{equation*}
	\begin{aligned}
	|V \circ \Phi_{W}^{y_{0}}(s)| & \leq \frac{5}{4}|V \circ\Phi_{W}^{y_{0}}(s_{*})|+C \int_{s_{*}}^{s} e^{-\frac{s^\prime}{6}} d s^{\prime}\\
    &\leq \frac{5}{4}|V \circ \Phi_{W}^{y_{0}}(s_{*})|+C \delta^{\frac{1}{6}}.
	\end{aligned}
	\end{equation*}
	(I) If $\frac{1}{2} e^{\frac{3 s}{2}} \leq|y_{0}|<\infty$  and $ s_{*}=s_{0} $, then one uses  $\eqref{eq:3.3}_{3}$ to find
	\begin{equation*}
	|V \circ \Phi_{W}^{y_{0}}(s)| \leq \frac{5}{4} e^{s_{0}} \delta+C \delta^{\frac{1}{6}} \leq \frac{3}{2}.
	\end{equation*}
	(II) If $ s_{*}>s_{0}$  and $ |y_{0}|=\frac{1}{2} e^{\frac{3 s_{*}}{2}} $, one instead uses  $\eqref{eq:4.2}_{2}$ to deduce
	\begin{equation*}
	\begin{aligned}
	|V\circ \Phi_{W}^{y_{0}}(s)| & \leq \frac{5}{4} e^{s_{*}}\bigg(\delta^{\frac{1}{18}}+\frac{7}{20}\bigg)\langle y_0 \rangle^{-\frac{2}{3}}+C \delta^{\frac{1}{6}}  \\
	& \leq \frac{5}{4} \times \frac{2}{5} \times 2^{\frac{2}{3}} e^{s_{*}} e^{-s_{*}}+C \delta^{\frac{1}{6}} \leq \frac{3}{2},
	\end{aligned}
	\end{equation*}
	where one has used \eqref{eq:2.16} to get a sharp estimate
	\begin{equation*}
	\begin{aligned}
	|\partial_{y} W(y, s)| & \leq|\partial_{y} \widetilde{W}(y, s)|+|\partial_{y} \widetilde{W}(y)| \leq \bigg(\delta^{\frac{1}{18}} +\frac{7}{20}\bigg)\langle y_0 \rangle^{-\frac{2}{3}}.
	\end{aligned}
	\end{equation*}

	\subsection{Closing of $\eqref{eq:4.3}_{2}$} 
	The bootstrap assumption $\eqref{eq:4.3}_{2}$ can be verified by showing
	\begin{equation*}
	|\partial_{y}^{2} W\circ \Phi_{W}^{y_{0}}(s)| \leq \frac{3}{4} M^{\frac{1}{5}}.
	\end{equation*}
	For such purpose, we consider the equation $\eqref{eq:2.17}_{1}$ with $n=2$ which reads 
	\begin{equation*}
	\bigg(\partial_{s}+\underbrace{\frac{5}{2} +3 \beta_{\tau} \partial_{y} W}_{\mathcal{D}}\bigg) \partial_{y}^{2}W+\mathcal{V}_{W} \cdot\partial_{y}^{3} W=F_{W}^{(2)}.
	\end{equation*}
	The forcing term can be estimated by
	\begin{equation*}
	\begin{aligned}
	|F_{W}^{(2)}|&\lesssim |e^{-\frac{s}{2}}\partial^{2}_{y}B|+|\partial^{2}_{y} G_{W}||\partial_{y} W|+|\partial_{y} G_{W}||\partial^{2}_{y} W|\\
	&\lesssim Me^{-\frac{7}{2}s}+Me^{-\frac{23}{18}s}+Me^{-\frac{5}{18}s}+M^{\frac{6}{5}}e^{-\frac{s}{3}}\leq e^{-\frac{s}{6}}.
	\end{aligned}
	\end{equation*}
	To handle the damping, there are two cases to consider depending on the size of $|y|$.
	
	\underline{Case 1: $ \ell \leq|y| \leq \frac{1}{2} e^{\frac{3 s}{2}} $.} By \eqref{eq:2.14} and $ \eqref{eq:4.2}_{2} $, the damping is bounded as 
	\begin{equation*}
	\mathcal{D} \geq \frac{5}{2}-3(1+4Me^{-\frac{s}{3}})(1+\delta^{\frac{1}{18}})\langle y \rangle^{-\frac{2}{3}} \geq -3\langle y \rangle^{-\frac{2}{3}}.
	\end{equation*}
	Then we have
	\begin{equation*}
	\begin{aligned}
	|\partial_{y}^{2} W \circ \Phi_{W}^{y_{0}}(s)| & \leq \ell^{-30}|\partial_{y}^{2} W \circ \Phi_{W}^{y_{0}}(s_{*})|+\ell^{-30} \int_{s_{*}}^{s} e^{-\frac{s^{\prime}}{6}} d s^{\prime} \\
	& \leq \ell^{-30}(|\partial_{y}^{2} W \circ \Phi_{W}^{y_{0}}(s_{*})|+2 \delta^{\frac{1}{6}}),
	\end{aligned}
	\end{equation*}
	in which the last term can be further estimated as follows:\\
	(I)  $\ell \leq|y_{0}| \leq \frac{1}{2} e^{\frac{3 s}{2}}$  and  $s_{*}=s_{0} $. It follows from   $\eqref{eq:3.4}_{2}$   that
	\begin{equation*}
	|\partial_{y}^{2} W \circ \Phi_{W}^{y_{0}}(s)| \leq \ell^{-30}(M^{\frac{1}{10}}+2 \delta^{\frac{1}{6}}) \leq \frac{3}{4} M^{\frac{1}{5}}.
	\end{equation*}
	(II) $ s_{*}>s_{0}$  and $ |y_{0}|=\ell $. By  $\eqref{eq:4.3}_{1}$, one has
	\begin{equation*}
	|\partial_{y}^{2} W \circ \Phi_{W}^{y_{0}}(s)| \leq \ell^{-30}\bigg(\frac{1}{2} \delta^{\frac{1}{12}} \ell^{2}+\langle \ell \rangle^{-\frac{5}{3}}+2 \delta^{\frac{1}{6}}\bigg) \leq \frac{3}{4} M^{\frac{1}{5}} .
	\end{equation*}
	
	\underline{Case 2:  $\frac{1}{2} e^{\frac{3 s}{2}} \leq|y|<\infty$.} One uses  $\eqref{eq:4.2}_{3}$  to bound the damping below by
	\begin{equation*}
	\mathcal{D} \geq \frac{5}{2}-3 \times \frac{101}{100} \times 2 e^{-s} \geq 2.
	\end{equation*}
	Hence we get
	\begin{equation*}
	\begin{aligned}
	|\partial_{y}^{2} W \circ \Phi_{W}^{y_{0}}(s)| & \leq e^{-2(s-s_{*})}\bigg(|\partial_{y}^{2} W \circ \Phi_{W}^{y_{0}}(s_{*})| +\int_{s_{*}}^{s} e^{-\frac{s^{\prime}}{6}} d s^{\prime}\bigg) \\
	& \leq e^{-2(s-s_{*})}\big(|\partial_{y}^{2} W \circ \Phi_{W}^{y_{0}}(s_{*})| +2 \delta^{\frac{1}{6}}\big)\\
	&\leq M^{\frac{1}{10}}+2\delta^{\frac{1}{6}}\leq \frac{3}{4} M^{\frac{1}{5}}.
	\end{aligned}
	\end{equation*}

	\section{Proof of Theorem \ref{thm1}}\label{sec10}
	
	Based on the preliminaries in the previous sections, we can achieve the proof of Theorem \ref{thm1}. The proof in this section is close to \cite{MR4321245,MR4612576}, we include it here for the sake of completeness.
	\subsection{Precise blowup information of the solution}
	\
	\par
	
	1. \underline{Blowup time and location.} The blowup time and location have been obtained in \eqref{eq:6.2} and \eqref{eq:6.5}, respectively.
	
	2.  \underline{$L^{\infty}$  bound of $ w $.} It follows from \eqref{eq:6.1} that
	\begin{equation*}
	\|w(\cdot, t)\|_{L^{\infty}}=\|e^{-\frac{s}{2}} W+\kappa\|_{L^{\infty}} \leq M \quad \text { for } t \in[-\delta, T_{*}] .
	\end{equation*}
	
	3. \underline{Blowup rate of  $\partial_{x} w $.} We first claim that
	\begin{equation}\label{eq:10.1}
	\frac{1}{2}(T_{*}-t) \leq \tau(t)-t \leq 2(T_{*}-t).
	\end{equation}
	The first and second inequality is equivalent to
	\begin{equation}\label{eq:10.2}
	T_{*} \leq 2 \tau(t)-t
	\end{equation}
	and
	\begin{equation}\label{eq:10.3}
	\tau(t)+t \leq 2 T_{*},
	\end{equation}
	respectively. Both of \eqref{eq:10.2} and \eqref{eq:10.3} can be verified by the fact that $2 \tau(t)-t$  and $ \tau(t)+t$ is monotone decreasing and increasing due to \eqref{eq:4.8}, together with $\tau(T_{*})=T_{*} $, respectively.
	
	On the one hand, by \eqref{eq:2.22}, one calculates that
	\begin{equation}\label{eq:10.4}
	\partial_{x} w(\xi(x), t)=\frac{1}{\tau(t)-t} \partial_{y} W(0, s)=-\frac{1}{\tau(t)-t}.
	\end{equation}
	It follows from \eqref{eq:10.1} and \eqref{eq:10.4} that
	\begin{equation*}
	\frac{1}{2(T_{*}-t)} \leq|\partial_{x} w(\xi(x), t)| \leq \frac{2}{T_{*}-t},
	\end{equation*}
	which means that
	\begin{equation*}
	\lim _{t \rightarrow T_{*}} \partial_{x} w(\xi(x), t)=-\infty .
	\end{equation*}
	So  $\partial_{x} w $ blows up at  $x_{*}=\xi(T_{*}) $.
	On the other hand, observing
	\begin{equation}\label{eq:10.5}
	\partial_{x} w(x, t)=\frac{1}{\tau(t)-t} \partial_{y} W(y, s),
	\end{equation}
	one deduces from \eqref{eq:4.12} and \eqref{eq:10.1} that
	\begin{equation*}
	\frac{1}{2(T_{*}-t)} \leq\|\partial_{x} w(\cdot, t)\|_{L^{\infty}} \leq \frac{2}{T_{*}-t} .
	\end{equation*}

	4. \underline{The regularity of  $w$  at  $T_{*} $.} We first claim that if  $|x-x_{*}|>\frac{1}{2} $, then
	\begin{equation}\label{eq:10.6}
	|\partial_{x} w(x, T_{*})| \leq 2 .
	\end{equation}
	To verify \eqref{eq:10.6}, one first sees that there exists  $t_{1} \in[-\epsilon, T_{*})$  such that
	\begin{equation*}
	|x-\xi(t)| \geq \frac{1}{2} \quad \text { for } t \in[t_{1}, T_{*}],
	\end{equation*}
	which implies in terms of the self-similar variables that
	\begin{equation*}
	|y| \geq \frac{1}{2(\tau(t)-t)^{\frac{3}{2}}}=\frac{1}{2} e^{\frac{3}{2} s} \quad \text { for } t \in[t_{1}, T_{*}).
	\end{equation*}
	Then this together with  $\eqref{eq:4.2}_{3}$ leads to
	\begin{equation*}
	|\partial_{x} w(x, t)|=e^{s}|\partial_{y} W(y, s)| \leq 2 \quad \text { for } t \in[t_{1}, T_{*}),
	\end{equation*}
	which proves \eqref{eq:10.6} by sending  $t$ to  $T_{*}$  in the above inequality.
	
	We next claim that if $ 0<|x-x_{*}| \leq \frac{1}{2} $, then
	\begin{equation}\label{eq:10.7}
	|\partial_{x} w(x, T_{*})| \sim|x-x_{*}|^{-\frac{2}{3}} .
	\end{equation}
	Indeed, in this case, there exists  $t_{2} $ close to  $T_{*}$  in  $[-\epsilon, T_{*})$  such that
	\begin{equation*}
	\frac{1}{2}|x-x_{*}| \leq|x-\xi(t)| \leq|x-x_{*}| \quad \text { for } t \in[t_{2}, T_{*}],
	\end{equation*}
	which implies
	\begin{equation}\label{eq:10.8}
	\frac{1}{2}|x-x_{*}| e^{\frac{3}{2} s} \leq|y| \leq|x-x_{*}| e^{\frac{3}{2} s} \leq \frac{1}{2} e^{\frac{3}{2} s} \quad \text { for } t \in[t_{1}, T_{*}) .
	\end{equation}
	By choosing a larger  $t_{2}$  if necessary, we may also assume $ \ell \leq|y| $. It follows from  $\eqref{eq:4.2}_{2}$, 
	\eqref{eq:10.5}  and \eqref{eq:10.8} that
	\begin{equation*}
	|\partial_{x} w(x, t)| \lesssim \frac{1}{\tau(t)-t}|y|^{-\frac{2}{3}} \lesssim|x-x_{*}|^{-\frac{2}{3}},
	\end{equation*}
	and
	\begin{equation*}
	|\partial_{x} w(x, t)| \gtrsim \frac{1}{\tau(t)-t}|y|^{-\frac{2}{3}} \gtrsim|x-x_{*}|^{-\frac{2}{3}}
	\end{equation*}
	for all  $t \in[t_{1}, T_{*}) $.
	Then \eqref{eq:10.7} follows by sending $t$  to  $T_{*}$ in the above two inequalities.
	
	Finally, we conclude from \eqref{eq:10.6} and \eqref{eq:10.7} that  $w(\cdot, T_{*}) \in C^{\frac{1}{3}}(\mathbb{R})$  and  $w$  has a cusp singularity at $ (x_{*}, T_{*}) $.
	
	\subsection{Asymptotic Convergence to Stationary Solution}
	\
	\par
	\underline{Step 1.} It is straightforward to verify, as shown in \eqref{eq:9.9}, that the limit $\nu = \lim_{s \rightarrow \infty} \partial_{y}^{3} W(0, s)$ exists. Let
	$\widetilde{W}_{\nu}:=W-\overline{W}_{\nu}$.
	To show \eqref{eq:3.9}, it suffices to verify
	\begin{equation}\label{eq:10.9}
	\limsup _{s \rightarrow \infty}|\widetilde{W}_{\nu}(y, s)|=0 \quad \text { for } y \in \mathbb{R}.
	\end{equation}
	Recalling \eqref{eq:3.1} and \eqref{eq:3.10}, one sees that \eqref{eq:10.9} is obvious when $y=0$. Thus, it remains to prove \eqref{eq:10.9} for $ y \neq 0 $.
	
	We begin by proving \eqref{eq:10.9} in the near field, specifically for $|y|\in(0,|y_{0}|)$ for some small $|y_{0}|$. Recalling the constrains \eqref{eq:2.22}, we can express $\widetilde{W}_{\nu}$ using Taylor expansion as follows:
	\begin{equation}\label{eq:10.10}
	\widetilde{W}_{\nu}(y, s)=\frac{y^{3}}{6} \partial_{y}^{3} \widetilde{W}_{\nu}(0, s)+\frac{y^{4}}{24} \partial_{y}^{4} \widetilde{W}_{\nu}(Y, s)
	\end{equation}
	for some  $Y$ between $0$ and  $y$  when  $y$ is close to zero. Next, we handle the RHS of \eqref{eq:10.10} term by term. Observe that
	$$\lim _{s \rightarrow \infty} \partial_{y}^{3} \widetilde{W}_{\nu}(0, s)=\lim _{s \rightarrow \infty} \partial_{y}^{3} W(0, s)-\nu=0.$$
	Hence, first fixing a small positive constant $|y_{0}|>0 $, and then choosing a  small  $\delta^{\prime} \in(0, M|y_{0}|) $, there exists a large enough  $s_{0}=s_{0}(y_{0}, \delta^{\prime})$  such that the following holds:
	\begin{equation}\label{eq:10.11}
	|\partial_{y}^{3} \widetilde{W}_{\nu}(y_{0}, s)| \leq \delta^{\prime} \quad\text{for}\ s \geq s_{0}.
	\end{equation}
	Notice that
	\begin{equation*}
	\partial_{y}^{4} \overline{W}_{\nu}(y)=\Big(\frac{\nu}{6}\Big)^{\frac{3}{2}} \partial_{y}^{4} \overline{W}\bigg(\Big(\frac{\nu}{6}\Big)^{\frac{1}{2}} y\bigg).
	\end{equation*}
	Then we have
	\begin{equation}\label{eq:10.12}
	\begin{aligned}
	\|\partial_{y}^{4} \widetilde{W}_{\nu}\|_{L^{\infty}} & \leq\|\partial_{y}^{4} W\|_{L^{\infty}}+\|\partial_{y}^{4} \overline{W}_{\nu}\|_{L^{\infty}} \\
	& \leq M+100\Big(\frac{\nu}{6}\Big)^{\frac{3}{2}} \leq 2 M.
	\end{aligned}
	\end{equation}
	It follows from \eqref{eq:10.10}-\eqref{eq:10.12} that
	\begin{equation}\label{eq:10.13}
	|\widetilde{W}_{\nu}(y_{0}, s)| \leq \frac{\delta^{\prime}|y_{0}|^{3}}{6}+\frac{M|y_{0}|^{4}}{12} \quad \text {for}\ s \geq s_{0}.
	\end{equation}

	\underline{Step 2.} Basic calculations indicate that $\widetilde{W}_{\nu}$ evolves as follows:
	\begin{equation}\label{eq:10.14}
	\bigg(\partial_{s}-\frac{1}{2}+\partial_{y} \overline{W}_{\nu}\bigg) \widetilde{W}_{\nu}+\bigg(\underbrace{\frac{3}{2} y+W}_{P}\bigg) \partial_{y} \widetilde{W}_{\nu}=F_{\widetilde{W}_{\nu}}
	\end{equation}
	with
	$$F_{\widetilde{W}_{\nu}}=\underbrace{-\frac{3}{4}\beta^{*} \beta_{\tau}e^{-\frac{s}{2}}B-\beta_{\tau}e^{-\frac{s}{2}} \dot{\kappa}}_{I_1}\underbrace{-\big(G_{W}+\beta_{\tau}\dot{\tau}W \partial_{y} W\big)\partial_{y} W}_{I_{2}}$$
	
	We first handle the forcing terms. For $I_1$, it can be shown that
	\begin{equation*}
	|I_{1}| \lesssim Me^{-\frac{s}{2}}+Me^{-\frac{5}{18}s}\leq e^{-\frac{2}{9}s}.
	\end{equation*}
	To estimate $I_2$, one shall consider three cases depending on the scales of $|y|$.\\
	(I) When $0\leq |y|\leq \ell$, we have 
	\begin{equation*}
	|I_2|\lesssim  M\Big(e^{-\frac{5}{18} s}+|\ell|e^{-\frac{s}{3}}+(\delta^{\frac{1}{12}}\ell^{4}+1)e^{-\frac{s}{3}}\Big) (\delta^{\frac{1}{12}}\ell^3+1)\leq e^{-\frac{2}{9}s}.
	\end{equation*}
	(II) When $\ell\leq |y|\leq \frac{1}{2}e^{\frac{3}{2}s}$, it follows from $\eqref{eq:2.15}$, $\eqref{eq:4.2}_2$ and \eqref{eq:4.21} that 
	\begin{equation*}
	|I_2|\lesssim M\Big(e^{-\frac{5}{18}s}+|\ell|e^{-\frac{s}{3}}+(1+\delta^{\frac{1}{15}})\langle y\rangle^{\frac{1}{3}} e^{-\frac{s}{3}}\Big)(1+\delta^{\frac{1}{18}})\langle y\rangle^{-\frac{2}{3}}\leq e^{-\frac{2}{9}s}.
	\end{equation*}
	(III) When $|y|\geq \frac{1}{2}e^{\frac{3}{2}s}$, we find
	\begin{equation*}
	|I_2|\lesssim(5M e^{\frac{s}{2}}+2Me^{\frac{s}{2}}\cdot Me^{-\frac{s}{3}})\cdot2e^{-s}\leq e^{-\frac{2}{9}s},
	\end{equation*}
	where $\eqref{eq:4.2}_3$ and \eqref{eq:4.22} have been used.
	Combining all estimates yields
	\begin{equation}\label{eq:10.15}
	|F_{\widetilde{W}_{\nu}}|\leq 2 e^{-\frac{2}{9}s}.
	\end{equation}
	
	To estimate $\widetilde{W}_{\nu} $, we need to work with a new Lagrangian trajectory associated with $P $. To this end, we consider the following Lagrangian trajectory:
	\begin{equation}\label{eq:10.16}
	\frac{d}{d s} \Psi^{y_{0}}(s)=\frac{3}{2} \Psi^{y_{0}}(s)+P \circ \Psi^{y_{0}}(s), \quad \Psi^{y_{0}}(s_{0})=y_{0}
	\end{equation}
	By applying the mean value theorem and using \eqref{eq:2.22}, we obtain
	\begin{equation}\label{eq:10.17}
	|W(y, s)| \leq|W(0, s)|+\|\partial_{y} W\|_{L^{\infty}}|y| \leq \frac{101}{100}|y|.
	\end{equation}
	It follows from \eqref{eq:10.16} and \eqref{eq:10.17} that
	\begin{equation*}
	\frac{d}{d s}|\Psi^{y_{0}}(s)|^{2} \geq 3|\Psi^{y_{0}}(s)|^{2}-2 \times \frac{101}{100}|\Psi^{y_{0}}(s)|^{2} \geq \frac{4}{5}|\Psi^{y_{0}}(s)|^{2},
	\end{equation*}
	which, together with  $\Psi^{y_{0}}(s_{0})=y_{0}$, gives
	\begin{equation}\label{eq:10.18}
	|\Psi^{y_{0}}(s)| \geq|y_{0}| e^{\frac{2}{5}(s-s_{0})}.
	\end{equation}

	Let $G(y, s)=e^{-\frac{3}{2}(s-s_{0})} \widetilde{W}_{\nu}(y, s) $. According to \eqref{eq:10.14}, it is straightforward to calculate that
	\begin{equation}\label{eq:10.19}
	\bigg(\frac{d}{d s}+1+\partial_{y} \overline{W}_{\nu}\bigg) G \circ \Psi^{y_{0}}(s)= e^{-\frac{3}{2}(s-s_{0})} F_{\widetilde{W}_{\nu}} \circ \Psi^{y_{0}}(s),
	\end{equation}
	with the damping bounded from below by 0 due to \eqref{eq:one-order}:
	$$1+\partial_{y} \overline{W}_{\nu} \geq 1-\|\partial_{y} \overline{W}_{\nu}\|_{L^{\infty}}=1-\|\partial_{y} \overline{W}\|_{L^{\infty}}=0.$$

	Plugging \eqref{eq:10.13} and \eqref{eq:10.15} into \eqref{eq:10.19}, one obtains
	\begin{equation*}
	\begin{aligned}
	|G \circ \Psi^{y_{0}}(s)| & \leq|G \circ \Psi^{y_{0}}(s_{0})|+\beta_{\tau} \int_{s_{0}}^{s}|F_{\widetilde{W}_{\nu}} \circ \Psi^{y_{0}}(s_{0})| e^{-\frac{3}{2}(s^{\prime}-s_{0})} d s^{\prime} \\
	& \leq \frac{\delta^{\prime}|y_{0}|^{3}}{6}+\frac{M|y_{0}|^{4}}{12}+\int_{s_{0}}^{s} e^{-\frac{2s^{\prime}}{9}} e^{-\frac{3}{2}(s^{\prime}-s_{0})} d s^{\prime} \\
	& \leq \frac{\delta^{\prime}|y_{0}|^{3}}{6}+\frac{M|y_{0}|^{4}}{12}+\frac{M|y_{0}|^{4}}{12} \leq M|y_{0}|^{4},
	\end{aligned}
	\end{equation*}
	which implies
	\begin{equation}\label{eq:10.20}
	|\widetilde{W}_{\nu} \circ \Psi^{y_{0}}(s)| \leq M|y_{0}|^{4} e^{-\frac{3}{2}(s-s_{0})}.
	\end{equation}

	Let $s_{*}=s_{0}+\frac{13}{5} \log |y_{0}|^{-1} $. It follows from \eqref{eq:10.20} that
	\begin{equation}\label{eq:10.21}
	|\widetilde{W}_{\nu} \circ \Psi^{y_{0}}(s)| \leq M|y_{0}|^{4-\frac{3}{2}\cdot \frac{13}{5}} \leq M|y_{0}|^{\frac{1}{10}}
	\end{equation}
	for $ s \in(s_{0}, s_{*}) $.
	For all  $y$  between  $y_{0}$  and  $\Psi^{y_{0}}(s_{*}) $, there exists $ s \in(s_{0}, s_{*})$ such that $y=\Psi^{y_{0}}(s) $. Therefore, for such $ (y, s)$, \eqref{eq:10.21}  gives
	\begin{equation}\label{eq:10.22}
	|\widetilde{W}_{\nu}(y, s)| \leq M|y_{0}|^{\frac{1}{10}}.
	\end{equation}
	By \eqref{eq:10.18}, one can infer that \eqref{eq:10.22} will cover at least all $ y$  satisfying
	\begin{equation}\label{eq:10.23}
	|y_{0}| \leq|y| \leq|y_{0}| e^{\frac{2}{5} \cdot \frac{13}{5} \log |y_{0}|^{-1}}=|y_{0}|^{-\frac{1}{25}}.
	\end{equation}
	Hence, with \eqref{eq:10.22} and \eqref{eq:10.23} established, one can take the limit as $s_{0} \rightarrow \infty $ to obtain
	\begin{equation}\label{eq:10.24}
	\underset{s \rightarrow \infty}{\limsup }|\widetilde{W}_{\nu}(y, s)| \leq M|y_{0}|^{\frac{1}{10}}.
	\end{equation}
	Finally, sending $y_{0} \rightarrow 0$ in \eqref{eq:10.24} proves for all  $y \neq 0$  that
	\begin{equation*}
	\limsup _{s \rightarrow \infty}|\widetilde{W}_{\nu}(y, s)|=0.
	\end{equation*}
	This completes the proof of \eqref{eq:10.9}.

	\section*{Acknowledgments}
	Y. Wang was supported by the Grant No. 830018 from China. J.-C. Saut was partially supported by the ANR project ISAAC (ANPG2023).

	

\end{document}